\documentclass{amsart}
\usepackage{textcomp}
\usepackage{indentfirst}
\usepackage{amsmath}
\usepackage{amsthm}
\usepackage{amsfonts}
\usepackage{amssymb}
\usepackage{mathrsfs}
\usepackage{stmaryrd}
\usepackage{bbm}
\usepackage[T1]{fontenc}
\usepackage{multirow}

\newtheorem{thm}{Theorem}[section]
\newtheorem{cor}[thm]{Corollary}
\newtheorem{lem}[thm]{Lemma}
\newtheorem{prop}[thm]{Proposition}
\theoremstyle{definition}

\newtheorem{exmp}[thm]{Example}

\numberwithin{equation}{section}


\begin{document}

\title[Asymmetric truncated Toeplitz operators II]{Asymmetric truncated Toeplitz operators on finite-dimensional spaces II}
\author[Bartosz \L anucha]{Bartosz \L anucha}
\address{Department of Mathematics, Maria Curie-Sk\l odowska University, Maria Curie-Sk\l odowska Square 1, 20-031 Lublin, Poland}%
\email{bartosz.lanucha@poczta.umcs.lublin.pl}%
\begin{abstract}
In this paper we present some consequences of the description of matrix representations of asymmetric truncated Toeplitz operators acting between finite-dimensional model spaces. In particular, we prove that these operators can be characterized using modified compressed shifts or the notion of shift invariance. We also describe rank-one asymmetric truncated Toeplitz operators acting between finite-dimensional model spaces. This paper is a sequel to the paper of J. Jurasik and B. \L anucha \textit{Asymmetric truncated Toeplitz operators on finite-dimensional spaces}.
\end{abstract}

\maketitle

\renewcommand{\thefootnote}{}

\footnote{2010 \emph{Mathematics Subject Classification}: 47B32,
47B35, 30H10.}

\footnote{\emph{Key words and phrases}: model spaces, truncated
Toeplitz operators, asymmetric truncated
Toeplitz operators, matrix representations.}

\footnote{\today}

\renewcommand{\thefootnote}{\arabic{footnote}}
\setcounter{footnote}{0}

\section{Introduction}

Let $H^2$ be the classical Hardy space of the unit disk
$\mathbb{D}=\{z\colon|z|<1\}$ and let $P$ denote the Szeg\"{o} projection, that is, the orthogonal projection from $L^2(\partial\mathbb{D})$ onto $H^2$.

The unilateral shift $S:H^2\rightarrow H^2$ is defined on $H^2$ by
$$Sf(z)=zf(z).$$
It is known that the $S$-invariant subspaces of $H^2$ can be characterized in term of inner functions. An inner function $\alpha$ is a function from $H^{\infty}$, the algebra of bounded analytic functions on $\mathbb{D}$, such that $|\alpha|=1$ a.e. on $\partial\mathbb{D}$. The theorem of A. Beurling states that every closed nontrivial $S$-invariant subspace of $H^2$ is of the form $\alpha H^2$ for some inner function $\alpha$. Consequently, each of the closed nontrivial subspaces of $H^2$ invariant under the backward shift $S^{*}$ is given by
$$K_{\alpha}=H^2\ominus \alpha H^2$$
 with $\alpha$ inner. The space $K_{\alpha}$ is called the model space corresponding to the inner function $\alpha$.

The model space $K_{\alpha}$ has a reproducing kernel property and for each $w\in\mathbb{D}$ the function
\begin{equation}\label{kerker}
k_{w}^{\alpha}(z)=\frac{1-\overline{\alpha(w)}\alpha(z)}{1-\overline{w}z},\quad z\in\mathbb{D},
\end{equation}
is the corresponding kernel function, that is, $f(w)=\langle f, k_{w}^{\alpha}\rangle$ for every
$f\in K_{\alpha}$ ($\langle
\cdot,\cdot\rangle$ being the usual integral inner product). Since $k_{w}^{\alpha}\in H^{\infty}$, the set $K_{\alpha}^{\infty}$ of all bounded functions in $K_{\alpha}$ is
dense in $K_{\alpha}$.

It can be verified that for an inner function $\alpha$ the formula
\begin{equation}\label{numerek3}
C_{\alpha}f(z)=\alpha(z)\overline{z}\overline{f(z)},\quad |z|=1,
\end{equation}
defines an antilinear, isometric involution on $L^2(\partial\mathbb{D})$ and that the so-called conjugation $C_{\alpha}$ preserves $K_{\alpha}$ (see \cite[Subection 2.3]{s}). It is easy to see that the conjugate kernel
$\widetilde{k}_{w}^{\alpha}=C_{\alpha}{k}_{w}^{\alpha}$ is given by
$$\widetilde{k}_{w}^{\alpha}(z)=\frac{\alpha(z)-\alpha(w)}{z-w}.$$

The unilateral shift is an example of a Toeplitz operator. The classical Toeplitz operator $T_{\varphi}$ on $H^2$ with symbol
$\varphi\in L^2(\partial\mathbb{D})$ is defined by
$$T_{\varphi}f=P(\varphi f).$$
The operator $T_{\varphi}$ is densely defined on bounded functions and can be extended to a bounded operator on $H^2$ if and only if $\varphi\in
L^{\infty}(\partial\mathbb{D})$. We have $S=T_z$.


In 2007 in his paper \cite{s} D. Sarason initiated the study of the class compressions of classical Toeplitz operators to the model spaces, that is, the truncated Toeplitz operators.

Let $\alpha$ be an inner function and let $P_{\alpha}$ denote the orthogonal projection from $L^2(\partial\mathbb{D})$ onto $K_{\alpha}$. A truncated Toeplitz operator on $K_{\alpha}$ with a symbol $\varphi\in
L^2(\partial\mathbb{D})$ is given by
$$A_{\varphi}^{\alpha}f=P_{\alpha}(\varphi f).$$
The operator $A_{\varphi}^{\alpha}$ is defined on a dense subset $K_{\alpha}^{\infty}$ of $K_{\alpha}$. Clearly, if $\varphi\in
L^{\infty}(\partial\mathbb{D})$, then $A_{\varphi}^{\alpha}$ extends to a bounded operator on $K_{\alpha}$. However, the boundednes of the symbol is not a necessary condition for the boundednes of the operator. Let $\mathscr{T}(\alpha)$ denote the set of all bounded truncated Toeplitz operators on $K_{\alpha}$.

Among many interesting results concerning truncated Toeplitz operators (see \cite{si,bros, gar,gar2,gar3}) there are the characterizations of these operators, given in \cite{s} and \cite{w}.

It is known that the classical Toeplitz operators are those bounded linear operators $T\colon H^2\rightarrow H^2$ that satisfy $T-S^{*}TS=0$. In \cite{s} Sarason gave a similar description of truncated Toeplitz operators on $K_{\alpha}$ using the compressed shift $S_{\alpha}=A_{z}^{\alpha}$. He proved that a bounded linear operator $A\colon K_{\alpha}\rightarrow K_{\alpha}$ belongs to $\mathscr{T}(\alpha)$ if and only if there exist $\chi,\psi\in K_{\alpha}$ such that
\begin{equation*}
A-S_{\alpha}^{*}AS_{\alpha}=\psi\otimes \widetilde{k}_{0}^{\alpha}+\widetilde{k}_{0}^{\alpha}\otimes \chi
\end{equation*}
(here $f\otimes g$ is the standard rank-one operator, $f\otimes g(h)=\langle h,g\rangle f$). In fact, \cite{s} contains a more general version of this result, where $S_{\alpha}$ is replaced by the so-called modified compressed shift. For $c\in\mathbb{C}$ the modified compressed shift $S_{\alpha,c}$ is given by
$$S_{\alpha,c}=S_{\alpha}+c(k_0^{\alpha}\otimes \widetilde{k}_0^{\alpha}).$$
An operator $A$ on $K_{\alpha}$ belongs to $\mathscr{T}(\alpha)$ if and only if for some $c\in\mathbb{C}$ there exist $\chi_c,\psi_c\in K_{\alpha}$ such that
\begin{equation*}
A-S_{\alpha,c}^{*}AS_{\alpha,c}=\psi_c\otimes \widetilde{k}_{0}^{\alpha}+\widetilde{k}_{0}^{\alpha}\otimes \chi_c.
\end{equation*}

The equation $T-S^{*}TS=0$ can be expressed in terms of the matrix representation of $T$ with respect to the monomial basis of $H^2$. It then states that $T$ is a Toeplitz operator if and only if the matrix representing $T$ is a Toeplitz matrix, i.e., an infinite matrix with constant diagonals. It follows that if $\alpha(z)=z^n$, then the operators from $\mathscr{T}(\alpha)$ are represented by finite Toeplitz matrices.

J. A. Cima, W. T. Ross and W.R. Wogen \cite{w} considered the matrix representations of truncated Toeplitz operators in the case when $K_{\alpha}$ is a finite-dimensional model space, that is, when $\alpha$ is a finite Blaschke product. In particular, they proved that if $\alpha$ is a finite Blaschke product with distinct zeros $a_1,\ldots,a_m$, then the matrix representing a truncated Toeplitz operator with respect to the reproducing kernel basis $\{k_{a_1}^{\alpha},\ldots,k_{a_m}^{\alpha}\}$ is completely determined by its entries along the main diagonal and the first row. They also gave similar characterizations in terms of matrix representaitons with respect to the so-called Clark bases and modified Clark bases (for detailed definitions see the following section).

Recently, in \cite{ptak} and \cite{part} the so-called asymmetric truncated
Toeplitz operators were introduced. Let $\alpha$, $\beta$ be two inner functions. An asymmetric
truncated Toeplitz operator $A_{\varphi}^{\alpha,\beta}$ on $K_{\alpha}$ with symbol $\varphi\in L^2(\partial\mathbb{D})$ is defined by
$$A_{\varphi}^{\alpha,\beta}f=P_{\beta}(\varphi f),\quad f\in K_{\alpha}.$$
Let
$$\mathscr{T}(\alpha,\beta)=\{A_{\varphi}^{\alpha,\beta}\ \colon\ \varphi\in
L^2(\partial\mathbb{D})\ \mathrm{and}\ A_{\varphi}^{\alpha,\beta}\
\mathrm{is\ bounded}\}.$$
Obviously, $A_{\varphi}^{\alpha,\alpha}=A_{\varphi}^{\alpha}$ and $\mathscr{T}(\alpha,\alpha)=\mathscr{T}(\alpha)$.

A characterization of operators form $\mathscr{T}(\alpha,\beta)$ in terms of compressed shifts was obtained in \cite{ptak} for $\alpha$ and $\beta$ such that $\beta$ divides $\alpha$ ($\alpha/\beta$ is also an inner function), whereas the matrix representations of asymmetric truncated Toeplitz operators on finite-dimensional spaces were discussed in \cite{blicharz2}.

In this paper we continue the study of asymmetric truncated Toeplitz operators acting between finite-dimensional model spaces. In Section 2 we cite some results from \cite{blicharz2} and then use these results in Section 3 to prove that the operators from $\mathscr{T}(\alpha,\beta)$ (in the finite-dimensional setting) can be characterized in terms of the modified compressed shifts $S_{\alpha,a}$, $S_{\beta,b}$. We also mention the characterization of asymmetric truncated Toeplitz operators in terms of the shift invariance.

In Section 4 we investigate the rank-one asymmetric truncated Toeplitz operators. We describe all the rank-one operators from $\mathscr{T}(\alpha,\beta)$ for finite Blaschke products $\alpha$ and $\beta$, and point out the cases when the given description differs form the one given by Sarason for rank-one truncated Toeplitz oprators.

For the reminder of the paper we assume that the spaces $K_{\alpha}$ and $K_{\beta}$ are finite-dimensional, that is, that $\alpha$ and $\beta$ are two finite Blaschke products of degree $m$ and $n$, respectively. Moreover, we assume that $m>0$ and $n>0$.

\section{Preliminaries}

For any $\lambda_1\in\partial\mathbb{D}$ define
\begin{equation}\label{1uj}
U_{\lambda_1}^{\alpha}=S_{\alpha}+\frac{\lambda_1+\alpha(0)}{1-|\alpha(0)|^2}(k_0^{\alpha}\otimes\widetilde{k}_0^{\alpha}).
\end{equation}
Then $U_{\lambda_1}^{\alpha}\colon K_{\alpha}\rightarrow K_{\alpha}$ is unitary and, in fact, all one-dimesional unitary perturbations of the compressed shift $S_{\alpha}$ are of this form. This result is due to D.N. Clark \cite{c} who also described the spectrum of $U_{\lambda_1}^{\alpha}$. In particular, he showed that the set of eigenvalues of $U_{\lambda_1}^{\alpha}$ consists of those $\eta\in\partial\mathbb{D}$ at which $\alpha$ has a finite angular derivative and
\begin{equation}\label{alf}
\alpha(\eta)=\alpha_{\lambda_1}=\frac{\lambda_1+\alpha(0)}{1+\overline{\alpha(0)}\lambda_1}.
\end{equation}
The corresponding eigenvectors are given by
\begin{equation*}
k_{\eta}^{\alpha}(z)=\frac{1-\overline{\alpha(\eta)}\alpha(z)}{1-\overline{\eta}z}.
\end{equation*}

Actually, $k_{\eta}^{\alpha}$ belongs to $K_{\alpha}$ for every $\eta\in\partial\mathbb{D}$ such that $\alpha$ has an angular derivative in the sense of Carath\'{e}odory at $\eta$ ($\alpha$ has an angular derivative at $\eta$ and $|\alpha(\eta)|=1$). In that case every $f\in K_{\alpha}$ has a non-tangential limit $f(\eta)$ at $\eta$ and $f(\eta)=\langle f,k_{\eta}^{\alpha}\rangle$ (see \cite[Thm. 7.4.1]{bros}). Note also that $\|k_{\eta}^{\alpha}\|=\sqrt{|\alpha'(\eta)|}$.

Here $\alpha$ is a finite Blaschke product, so $k_{\eta}^{\alpha}$ belongs to $K_{\alpha}$ for all $\eta\in\partial\mathbb{D}$. Since the degree of $\alpha$ is $m$, the equation \eqref{alf} has precisely $m$ distinct solutions $\eta_1,\ldots,\eta_m$ on the unit circle $\partial\mathbb{D}$ (see \cite[p. 6]{gar}). Moreover, since the dimension of $K_{\alpha}$ is $m$, the orthogonal set of corresponding eigenvectors $\{k_{\eta_1}^{\alpha},\ldots,k_{\eta_m}^{\alpha}\}$ forms a basis for $K_{\alpha}$. The set of normalized kernel functions $\{v_{\eta_1}^{\alpha},\ldots,v_{\eta_m}^{\alpha}\}$,
\begin{equation*}
v_{\eta_j}^{\alpha}=\|k_{\eta_j}^{\alpha}\|^{-1}k_{\eta_j}^{\alpha},\quad j=1,\ldots,m,
\end{equation*}
 is called the Clark basis corresponding to $\lambda_1$ (see \cite{c} for more details).

 Observe that for every $f\in K_{\alpha}$ we have
 \begin{displaymath}
 f=\sum_{j=1}^m\langle f,v_{\eta_j}^{\alpha}\rangle v_{\eta_j}^{\alpha}=\sum_{j=1}^m\frac{f(\eta_j)}{\sqrt{|\alpha'(\eta_j)|}} v_{\eta_j}^{\alpha}=\sum_{j=1}^m\frac{f(\eta_j)}{|\alpha'(\eta_j)|} k_{\eta_j}^{\alpha}.
 \end{displaymath}

If we put
\begin{equation*}
e_{\eta_j}^{\alpha}=\omega_{j}^{\alpha}v_{\eta_j}^{\alpha},\quad j=1,\ldots,m,
\end{equation*}
where
\begin{equation*}
\omega_{j}^{\alpha}=e^{-\frac{i}{2}(\mathrm{arg}\eta_j-\mathrm{arg}\lambda_1)},\quad j=1,\ldots,m,
\end{equation*}
then the basis $\{e_{\eta_1}^{\alpha},\ldots,e_{\eta_m}^{\alpha}\}$ has an additional property that $$C_{\alpha}e_{\eta_j}^{\alpha}=e_{\eta_j}^{\alpha},\quad j=1,\ldots,m,$$
where $C_{\alpha}$ is the conjugation given by \eqref{numerek3}. The basis $\{e_{\eta_1}^{\alpha},\ldots,e_{\eta_m}^{\alpha}\}$ is called the modified Clark basis.

Similarly, for $\lambda_2\in\partial\mathbb{D}$ there are $n$ orthogonal eigenvectors $k_{\zeta_1}^{\beta},\ldots,k_{\zeta_n}^{\beta}$ of the unitary operator
\begin{equation}\label{2uj}
U_{\lambda_2}^{\beta}=S_{\beta}+\frac{\lambda_2+\beta(0)}{1-|\beta(0)|^2}(k_0^{\beta}\otimes\widetilde{k}_0^{\beta}),
\end{equation}
each corresponding to a different solution $\zeta_j$, $j=1,\ldots,n$, of the equation
\begin{equation}\label{bet}
\beta(\zeta)=\beta_{\lambda_2}=\frac{\lambda_2+\beta(0)}{1+\overline{\beta(0)}\lambda_2}.
\end{equation}
The Clark basis $\{v_{\zeta_1}^{\beta},\ldots,v_{\zeta_n}^{\beta}\}$ and modified Clark basis $\{e_{\zeta_1}^{\beta},\ldots,e_{\zeta_n}^{\beta}\}$ for $K_{\beta}$ are defined analogously by
\begin{equation*}
v_{\zeta_j}^{\beta}=\|k_{\zeta_j}^{\beta}\|^{-1}k_{\zeta_j}^{\beta},\quad j=1,\ldots,n,
\end{equation*}
and
\begin{equation*}
e_{\zeta_j}^{\beta}=\omega_{j}^{\beta}v_{\zeta_j}^{\beta},\quad j=1,\ldots,n,
\end{equation*}
where
\begin{equation*}
\omega_{j}^{\beta}=e^{-\frac{i}{2}(\mathrm{arg}\zeta_j-\mathrm{arg}\lambda_2)},\quad j=1,\ldots,n.
\end{equation*}

Clearly, the equations \eqref{alf} and \eqref{bet} may have some solutions in common. In that case we assume that these solutions are precisely the numbers $\eta_j=\zeta_j$ for $j\leq l$ (with $l=0$ if there are no solutions in common).

In what follows we will use the characterization of operators form $\mathscr{T}(\alpha,\beta)$ in terms of their matrix representations with respect to the Clark bases.

\begin{thm}[\cite{blicharz2}, Thm. 3.4]\label{macierz3}
Let $\alpha$ and $\beta$ be two finite Blaschke products of degree $m>0$ and $n>0$, respectively. Let $\{v_{\eta_1}^{\alpha},\ldots, v_{\eta_m}^{\alpha} \}$ be the Clark basis for $K_{\alpha}$ corresponding to $\lambda_1\in\partial\mathbb{D}$, let $\{v_{\zeta_1}^{\beta},\ldots, v_{\zeta_n}^{\beta} \}$ be the Clark basis for $K_{\beta}$ corresponding to $\lambda_2\in\partial\mathbb{D}$ and assume that the sets $\{\eta_1,\ldots,\eta_m\}$, $\{\zeta_1,\ldots,\zeta_n\}$ have precisely $l$ elements in common: $\eta_j=\zeta_j$ for $j\leq l$ ($l=0$ if there are no elements in common). Finally, let $A$ be any linear transformation from $K_{\alpha}$ into $K_{\beta}$. If $M_A=(r_{s,p})$ is the matrix representation of $A$ with respect to the bases $\{v_{\eta_1}^{\alpha},\ldots, v_{\eta_m}^{\alpha} \}$ and $\{v_{\zeta_1}^{\beta},\ldots, v_{\zeta_n}^{\beta} \}$, and
  \vspace{0.2cm}
\begin{itemize}
\item[(a)]$l=0 $, then $A\in \mathscr{T}(\alpha,\beta)$ if and only if
\begin{equation}\label{c1}
\begin{split}
r_{s,p}=&\left(\frac{\sqrt{|\alpha'(\eta_1)|}}{\sqrt{|\alpha'(\eta_p)|}}\frac{\eta_p}{\eta_1}\frac{\eta_1-\zeta_s}{\eta_p-\zeta_s}r_{s,1}+\frac{\sqrt{|\alpha'(\eta_1)|}\sqrt{|\beta'(\zeta_1)|}}{\sqrt{|\alpha'(\eta_p)|}\sqrt{|\beta'(\zeta_s)|}}\frac{\eta_p}{\eta_1}\frac{\zeta_1-\eta_1}{\eta_p-\zeta_s}r_{1,1}\right.
\\&\phantom{=\frac1{\sqrt{|\alpha'(\eta_p)|}}}+\left.\frac{\sqrt{|\beta'(\zeta_1)|}}{\sqrt{|\beta'(\zeta_s)|}}\frac{\eta_p-\zeta_1}{\eta_p-\zeta_s}r_{1,p}\right)
\end{split}
\end{equation} for all $1\leq p \leq m$ and $1\leq  s\leq n$;
\vspace{0.5cm}
\item[(b)]$l>0$, then $A\in \mathscr{T}(\alpha,\beta)$ if and only if
\begin{equation}\label{c2}
r_{s,p}=\left(\frac{\sqrt{|\alpha'(\eta_s)|}\sqrt{|\beta'(\zeta_1)|}}{\sqrt{|\alpha'(\eta_p)|}\sqrt{|\beta'(\zeta_s)|}}\frac{\eta_p}{\eta_s}\frac{\eta_1-\zeta_s}{\eta_p-\zeta_s}r_{1,s}+\frac{\sqrt{|\beta'(\zeta_1)|}}{\sqrt{|\beta'(\zeta_s)|}}\frac{\eta_p-\zeta_1}{\eta_p-\zeta_s}r_{1,p}\right)
\end{equation} for all $p,s$ such that $1\leq p \leq m$, $1\leq  s\leq l$, $s\neq p$, and
\begin{equation}\label{c3}
r_{s,p}=\left(\frac{\sqrt{|\alpha'(\eta_1)|}}{\sqrt{|\alpha'(\eta_p)|}}\frac{\eta_p}{\eta_1}\frac{\eta_1-\zeta_s}{\eta_p-\zeta_s}r_{s,1}+\frac{\sqrt{|\beta'(\zeta_1)|}}{\sqrt{|\beta'(\zeta_s)|}}\frac{\eta_p-\zeta_1}{\eta_p-\zeta_s}r_{1,p}\right)
\end{equation} for all $p,s$ such that $1\leq p \leq m$, $l<  s\leq n$.
\end{itemize}
\end{thm}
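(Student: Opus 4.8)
The plan is to prove necessity by distilling from $A=A_\varphi^{\alpha,\beta}$ a single rank-two identity for the matrix entries, and then to recover \eqref{c1}--\eqref{c3} by elimination; sufficiency I would settle by prescribing a symbol. Throughout I work with $F(p,s)=\langle\varphi\,k_{\eta_p}^{\alpha},k_{\zeta_s}^{\beta}\rangle$, so that $r_{s,p}=F(p,s)/\big(\sqrt{|\alpha'(\eta_p)|}\,\sqrt{|\beta'(\zeta_s)|}\big)$, since $A_\varphi^{\alpha,\beta}v_{\eta_p}^{\alpha}=P_\beta(\varphi v_{\eta_p}^{\alpha})$ and $v_{\zeta_s}^{\beta}\in K_\beta$.

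The computation that drives everything uses that $\alpha(\eta_p)=\alpha_{\lambda_1}$ and $\beta(\zeta_s)=\beta_{\lambda_2}$ are independent of the indices, so that $k_{\eta_p}^{\alpha}(z)=(1-\overline{\alpha_{\lambda_1}}\alpha(z))/(1-\overline{\eta_p}z)$ and $k_{\zeta_s}^{\beta}(z)=(1-\overline{\beta_{\lambda_2}}\beta(z))/(1-\overline{\zeta_s}z)$ have index-free numerators. Writing $F(p,s)$ as an integral over $\partial\mathbb D$, replacing $\overline{k_{\zeta_s}^{\beta}}$ via $\overline z=1/z$, and absorbing the index-free factors into $\Phi=\varphi(1-\overline{\alpha_{\lambda_1}}\alpha)\overline{(1-\overline{\beta_{\lambda_2}}\beta)}$, the integrand carries the kernel $z/\big((1-\overline{\eta_p}z)(z-\zeta_s)\big)$. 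Splitting this by partial fractions, using $1-\overline{\eta_p}\zeta_s=\overline{\eta_p}(\eta_p-\zeta_s)$, gives for $\eta_p\neq\zeta_s$
$$F(p,s)=\frac{\eta_p}{\eta_p-\zeta_s}\big(G(p)+\zeta_s H(s)\big),$$
where $G(p)=\tfrac1{2\pi}\int\Phi(z)(1-\overline{\eta_p}z)^{-1}\,|dz|$ depends only on $\eta_p$ and $H(s)=\tfrac1{2\pi}\int\Phi(z)(z-\zeta_s)^{-1}\,|dz|$ only on $\zeta_s$. Dividing by the norms, this says that with $a_p=\eta_p/\sqrt{|\alpha'(\eta_p)|}$, $b_s=1/\sqrt{|\beta'(\zeta_s)|}$ and $c_p=\eta_p G(p)/\sqrt{|\alpha'(\eta_p)|}$, $d_s=\zeta_s H(s)/\sqrt{|\beta'(\zeta_s)|}$,
$$(\eta_p-\zeta_s)\,r_{s,p}=c_p\,b_s+a_p\,d_s,\qquad \eta_p\neq\zeta_s.$$
This is the structural heart of the argument, and the step I expect to be the main obstacle: it is exactly where the boundary function theory and the constancy of $\alpha,\beta$ on the Clark points conspire to make $G,H$ functions of a single index, hence the right-hand side of rank two.

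Granting this, case (a) is pure linear algebra: as no $\eta_p-\zeta_s$ vanishes, I solve for $d_s$ from the first column ($p=1$) and for $c_p$ from the first row ($s=1$), evaluate the rank-two identity at the corner to get $c_1/a_1+d_1/b_1=(\eta_1-\zeta_1)r_{1,1}/(a_1b_1)$, substitute back, and divide by $\eta_p-\zeta_s$. Inserting $a_p/a_1=(\eta_p/\eta_1)\sqrt{|\alpha'(\eta_1)|}/\sqrt{|\alpha'(\eta_p)|}$ and $b_s/b_1=\sqrt{|\beta'(\zeta_1)|}/\sqrt{|\beta'(\zeta_s)|}$ then reproduces \eqref{c1} term for term.

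In case (b) the corner $(1,1)$ is itself a coincidence, so the rank-two identity is unavailable there; instead, at any coincidence $\eta_j=\zeta_j$ the kernels $1/(1-\overline{\eta_j}z)$ and $1/(z-\zeta_j)$ differ by the constant factor $-\eta_j$, which forces $G(j)+\eta_jH(j)=0$, i.e.\ $c_j/a_j+d_j/b_j=0$. For $l<s\le n$ the whole row is off-coincidence and the same elimination as in (a) applies, except that the $j=1$ relation annihilates the corner contribution, leaving \eqref{c3}. For $1\le s\le l$ one instead uses the coincidence relation at $j=s$ to trade $d_s$ for $c_s$ and reads $c_s$ off the first-row entry $r_{1,s}$ (legitimate because for $s\le l$ the first-column entry $r_{s,1}$ is itself determined rather than free); the terms in $d_1$ cancel and \eqref{c2} results. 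Finally, for sufficiency, given a matrix obeying the stated relations I recover the required values $G(p),H(s)$ from its first row and column (together with the free diagonal entries in case (b)); since $G$ and $H$ depend on disjoint halves of the Fourier spectrum of $\Phi$, these values are realized by some $\Phi$, and $\varphi=\Phi/\big((1-\overline{\alpha_{\lambda_1}}\alpha)\overline{(1-\overline{\beta_{\lambda_2}}\beta)}\big)$ is then a symbol whose operator has exactly the prescribed matrix, the solution space and $\mathscr T(\alpha,\beta)$ both having dimension $m+n-1$.
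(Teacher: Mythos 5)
This paper never proves Theorem~\ref{macierz3}: it is imported verbatim from \cite[Thm.~3.4]{blicharz2}, so there is no internal proof to compare against and your attempt must stand on its own. Its first half does. The partial-fraction identity
\[
\frac{z}{(1-\overline{\eta_p}z)(z-\zeta_s)}=\frac{\eta_p}{\eta_p-\zeta_s}\left[\frac{1}{1-\overline{\eta_p}z}+\frac{\zeta_s}{z-\zeta_s}\right],
\]
valid since $1-\overline{\eta}_p\zeta_s=\overline{\eta}_p(\eta_p-\zeta_s)$, does give $(\eta_p-\zeta_s)r_{s,p}=c_pb_s+a_pd_s$ for all non-coincidence pairs when $A=A_{\varphi}^{\alpha,\beta}$, and at a coincidence $\eta_j=\zeta_j$ the identity $1/(1-\overline{\eta_j}z)\equiv-\eta_j/(z-\eta_j)$ forces $c_j/a_j+d_j/b_j=0$; both $G(p)$ and $H(s)$ are genuine (finite) functionals of $\varphi\in L^2$ because the vanishing factors in $\Phi$ tame the kernels. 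I have checked that your elimination from this rank-two structure reproduces \eqref{c1}, \eqref{c2} and \eqref{c3} exactly, including the degenerate rows and columns. In effect you derive, directly from the symbol, the matrix identity \eqref{x2} that Section 3 of this paper deduces \emph{from} the theorem; the necessity direction is sound.

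The genuine gap is in sufficiency. The function $\Phi$ is not a free $L^2$ function: $\Phi=\varphi\,(1-\overline{\alpha_{\lambda_1}}\alpha)\overline{(1-\overline{\beta_{\lambda_2}}\beta)}$ with $\varphi\in L^2$, and the multiplier vanishes at every Clark point $\eta_p$, $\zeta_s$ on $\partial\mathbb{D}$. Since $|z-\eta_p|^{-1}\notin L^2(\partial\mathbb{D})$, a generic trigonometric polynomial $\Phi$ realizing prescribed values $(G,H)$ yields $\varphi=\Phi/\big((1-\overline{\alpha_{\lambda_1}}\alpha)\overline{(1-\overline{\beta_{\lambda_2}}\beta)}\big)\notin L^2$, hence no admissible symbol; and once $\Phi$ is forced to vanish at all these points, its analytic and anti-analytic halves are coupled, so the ``disjoint halves of the Fourier spectrum'' argument collapses. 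What is true, but must be proved, is that the functionals $\varphi\mapsto G(p)$, $\varphi\mapsto H(s)$ --- together with, in case (b), the functionals $\varphi\mapsto r_{j,j}$ at coincidences, which are \emph{not} functions of $(G,H)$ at all (the partial-fraction identity is void there), contrary to your bookkeeping --- admit no linear relations beyond the $l$ coincidence relations; this follows from a residue argument applied to their representing functions $C/(z-\eta_p)$, $-\zeta_sC/(z-\zeta_s)$, $-\eta_jC/(z-\eta_j)^2$, where $C=z\alpha^{-1}(\alpha-\alpha_{\lambda_1})(1-\overline{\beta_{\lambda_2}}\beta)$. Alternatively, and most cleanly, drop the construction entirely: necessity embeds the linear space $\mathscr{T}(\alpha,\beta)$ into the solution space of \eqref{c1} (resp.\ \eqref{c2}--\eqref{c3}), the latter has dimension $m+n-1$ by counting free entries, and $\dim\mathscr{T}(\alpha,\beta)=m+n-1$ by \cite[Prop.~2.1]{blicharz2} (invoked in Section 4 of this paper), so the two spaces coincide. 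You mention this dimension count only as a final subordinate clause; it should be promoted to the actual argument, since as written your sufficiency step fails.
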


\section{Characterizations using rank-two operators}

In this section we prove that if $\alpha$ and $\beta$ are finite Blaschke products, then a characterization in terms of modified compressed shifts, similar to that form \cite[Thm. 10.1]{s}, can be given for operators from $\mathscr{T}(\alpha,\beta)$.

\begin{thm}\label{crit}
Let $\alpha$, $\beta$ be two finite Blaschke products and let $a$, $b$ be two complex numbers. A linear operator $A$ from $K_{\alpha}$ into $K_{\beta}$ belongs to $\mathscr{T}(\alpha,\beta)$ if and only if there are functions $\chi_{a,b}\in K_{\alpha}$ and $\psi_{a,b}\in K_{\beta}$ such that
\begin{equation}\label{con1}
A-S_{\beta,b}A S_{\alpha,a}^{*}=\psi_{a,b}\otimes k_{0}^{\alpha}+k_{0}^{\beta}\otimes\chi_{a,b}.
\end{equation}
\end{thm}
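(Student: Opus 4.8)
The plan is to fix $a,b$ and show that the set $W_{a,b}$ of operators $A\colon K_\alpha\to K_\beta$ for which \eqref{con1} holds (with some $\chi_{a,b}\in K_\alpha$, $\psi_{a,b}\in K_\beta$) coincides with $\mathscr T(\alpha,\beta)$. Denote by $V$ the space of all operators $\psi\otimes k_0^\alpha+k_0^\beta\otimes\chi$ ($\psi\in K_\beta$, $\chi\in K_\alpha$), so that \eqref{con1} says $A-S_{\beta,b}AS_{\alpha,a}^{*}\in V$. My first step is to observe that $W_{a,b}$ does not depend on $a,b$. Substituting $S_{\beta,b}=S_\beta+b\,k_0^\beta\otimes\widetilde k_0^\beta$ and $S_{\alpha,a}^{*}=S_\alpha^{*}+\overline a\,\widetilde k_0^\alpha\otimes k_0^\alpha$ and expanding,
\begin{equation*}
S_{\beta,b}AS_{\alpha,a}^{*}=S_\beta AS_\alpha^{*}+\overline a\,(S_\beta A\widetilde k_0^\alpha)\otimes k_0^\alpha+b\,k_0^\beta\otimes(S_\alpha A^{*}\widetilde k_0^\beta)+b\overline a\,\langle A\widetilde k_0^\alpha,\widetilde k_0^\beta\rangle\,k_0^\beta\otimes k_0^\alpha,
\end{equation*}
and each of the three correction terms already lies in $V$. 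Hence $A-S_{\beta,b}AS_{\alpha,a}^{*}\in V$ if and only if $A-S_\beta AS_\alpha^{*}\in V$, and it is enough to prove the theorem for $a=b=0$.

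For the inclusion $\mathscr T(\alpha,\beta)\subseteq W_{0,0}$ I would evaluate $A-S_\beta AS_\alpha^{*}$ on $A=A_\varphi^{\alpha,\beta}$. Using $S_\alpha^{*}f=(f-f(0))/z$ together with the identity $S_\beta(P_\beta u)=P_\beta(zu)-\widehat u(-1)\,k_0^\beta$, where $\widehat u(-1)$ is the $(-1)$-st Fourier coefficient of $u$ (this correction, coming from the co-analytic part of $u$, is the point a careless computation would miss), and putting $u=\varphi\,\overline z\,(f-f(0))$ so that $zu=\varphi(f-f(0))$, one obtains after simplification
\begin{equation*}
(A-S_\beta AS_\alpha^{*})f=\langle f,k_0^\alpha\rangle\,P_\beta\varphi+\bigl\langle f,\,P_\alpha\overline\varphi-\overline{\langle\varphi,1\rangle}\,k_0^\alpha\bigr\rangle\,k_0^\beta .
\end{equation*}
Thus $A-S_\beta AS_\alpha^{*}=(P_\beta\varphi)\otimes k_0^\alpha+k_0^\beta\otimes(P_\alpha\overline\varphi-\overline{\langle\varphi,1\rangle}k_0^\alpha)\in V$, i.e. $A_\varphi^{\alpha,\beta}\in W_{0,0}$.

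The reverse inclusion I would obtain from a dimension count, and this is where the $(a,b)$-independence is used: pass to the Clark values $a=\tfrac{\lambda_1+\alpha(0)}{1-|\alpha(0)|^2}$ and $b=\tfrac{\lambda_2+\beta(0)}{1-|\beta(0)|^2}$, for which $S_{\alpha,a}=U_{\lambda_1}^\alpha$ and $S_{\beta,b}=U_{\lambda_2}^\beta$ are unitary and the Clark bases diagonalize them, $S_{\alpha,a}^{*}v_{\eta_p}^\alpha=\overline{\eta_p}\,v_{\eta_p}^\alpha$ and $S_{\beta,b}v_{\zeta_s}^\beta=\zeta_s\,v_{\zeta_s}^\beta$. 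Writing $A=(r_{s,p})$ in these bases, the operator $A-S_{\beta,b}AS_{\alpha,a}^{*}$ has matrix $\bigl(1-\zeta_s\overline{\eta_p}\bigr)r_{s,p}$, while a short computation gives the matrix of a generic element of $V$ as $c_p\psi_s+e_s\overline{\chi_p}$, where $c_p=\langle v_{\eta_p}^\alpha,k_0^\alpha\rangle\neq0$, $e_s=\langle k_0^\beta,v_{\zeta_s}^\beta\rangle\neq0$, and $\psi_s$, $\chi_p$ are the Clark coordinates of $\psi$, $\chi$. Because $1-\zeta_s\overline{\eta_p}=0$ exactly when $\eta_p=\zeta_s$, that is when $p=s\le l$, membership $A\in W_{0,0}$ forces every entry $r_{s,p}$ with $p\neq s$ or $s>l$ to be determined by $\psi,\chi$, leaves the $l$ entries $r_{s,s}$ ($s\le l$) arbitrary, and imposes the $l$ equations $c_s\psi_s+e_s\overline{\chi_s}=0$. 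Accounting for the one-dimensional redundancy $(\psi_s,\overline{\chi_p})\mapsto(\psi_s+e_st,\overline{\chi_p}-c_pt)$ in the parametrization of $V$, this gives $\dim W_{0,0}=(m+n+l)-l-1=m+n-1$.

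Finally, Theorem~\ref{macierz3} determines every matrix entry of an operator in $\mathscr T(\alpha,\beta)$ from a set of $m+n-1$ entries that may be prescribed freely, so that $\dim\mathscr T(\alpha,\beta)=m+n-1$ as well. Since $\mathscr T(\alpha,\beta)\subseteq W_{0,0}=W_{a,b}$ by the necessity step, the equality of dimensions yields $W_{a,b}=\mathscr T(\alpha,\beta)$ for every $a,b$, which is exactly the claim. I expect the sufficiency half to be the main obstacle: it is what forces the passage to the Clark values in order to diagonalize the map $A\mapsto A-S_{\beta,b}AS_{\alpha,a}^{*}$, and the argument works only because the preliminary observation that $W_{a,b}$ is independent of $a,b$ lets one settle the convenient unitary case and transfer the conclusion to all $a,b$ at once.
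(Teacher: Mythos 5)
Your proposal is correct, and it genuinely diverges from the paper's proof after the shared opening move. Both arguments begin identically: the observation that the solvability of \eqref{con1} is independent of $(a,b)$ because the correction terms in the expansion of $S_{\beta,b}AS_{\alpha,a}^{*}$ lie in $V$, followed by the passage to the Clark values of the parameters so that the Clark bases diagonalize the map $A\mapsto A-S_{\beta,b}AS_{\alpha,a}^{*}$. From there the two proofs separate. For the inclusion $\mathscr{T}(\alpha,\beta)\subseteq W_{a,b}$, the paper stays inside the matrix picture: it invokes Theorem~\ref{macierz3} to reduce the system to the equations indexed by the first row, first column and common diagonal, and then solves explicitly for the coordinates $\chi_p,\psi_s$. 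You instead compute directly, at $a=b=0$, that $A_{\varphi}^{\alpha,\beta}-S_{\beta}A_{\varphi}^{\alpha,\beta}S_{\alpha}^{*}=(P_{\beta}\varphi)\otimes k_{0}^{\alpha}+k_{0}^{\beta}\otimes\bigl(P_{\alpha}\overline{\varphi}-\overline{\langle\varphi,1\rangle}k_{0}^{\alpha}\bigr)$; I checked this identity (including the Fourier-coefficient correction in $S_{\beta}P_{\beta}u=P_{\beta}(zu)-\widehat{u}(-1)k_{0}^{\beta}$) and it is right, and it has the advantage of producing $\psi,\chi$ explicitly from the symbol without any basis. For the reverse inclusion, the paper verifies the identities \eqref{c1}, \eqref{c2}, \eqref{c3} entry by entry from the diagonalized equation, whereas you avoid all of that with a dimension count: $\dim W_{a,b}=m+n-1$ (your bookkeeping of the $l$ free common-diagonal entries, the $l$ constraints $c_s\psi_s+e_s\overline{\chi_s}=0$, and the one-dimensional redundancy is correct, since the redundancy direction lies inside the constraint space and the kernel of the parametrization is exactly one-dimensional there), matched against $\dim\mathscr{T}(\alpha,\beta)=m+n-1$. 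Two small points you should make explicit: (i) the claim that Theorem~\ref{macierz3} lets $m+n-1$ entries be ``prescribed freely'' requires either checking that \eqref{c1}--\eqref{c3} become tautologies on the generating entries, or simply citing the dimension formula $\dim\mathscr{T}(\alpha,\beta)=m+n-1$ from Proposition~2.1 of the paper of Jurasik and \L anucha on finite-dimensional spaces, which this paper itself uses in Section~4; (ii) in the $l>0$ case the generating set is not the first row and column alone but the first row, the first column below row $l$, and the common-diagonal entries. Neither point is a gap in substance, and your proof trades the paper's explicit but lengthy verifications for a cleaner structural argument at the cost of relying on the known dimension of $\mathscr{T}(\alpha,\beta)$.
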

\begin{proof}
Let $A$ be a linear operator from $K_{\alpha}$ into $K_{\beta}$ and let $a$, $b$ be two complex numbers.

Note first that if $a_1$, $b_1$ are any other two complex numbers, then the operator $A$ satisfies \eqref{con1} with some $\chi_{a,b}\in K_{\alpha}$, $\psi_{a,b}\in K_{\beta}$ if and only if there exist $\chi_{a_1,b_1}\in K_{\alpha}$, $\psi_{a_1,b_1}\in K_{\beta}$ such that
\begin{equation}\label{con11}
A-S_{\beta,b_1}A S_{\alpha,a_1}^{*}=\psi_{a_1,b_1}\otimes k_{0}^{\alpha}+k_{0}^{\beta}\otimes\chi_{a_1,b_1}.
\end{equation}
Indeed, using
\begin{displaymath}
\begin{split}
S_{\beta,b}A S_{\alpha,a}^{*}&=[S_{\beta}+b(k_{0}^{\beta}\otimes \widetilde{k}_{0}^{\beta})]A[S_{\alpha}^{*}+\overline{a}(\widetilde{k}_{0}^{\alpha}\otimes {k}_{0}^{\alpha})]\\
&=S_{\beta}A S_{\alpha}^{*}+b (k_{0}^{\beta}\otimes S_{\alpha}A^{*}\widetilde{k}_{0}^{\beta})+\overline{a} (S_{\beta}A\widetilde{k}_{0}^{\alpha}\otimes k_{0}^{\alpha})+b\overline{a}\langle A\widetilde{k}_{0}^{\alpha},\widetilde{k}_{0}^{\beta}\rangle (k_{0}^{\beta}\otimes {k}_{0}^{\alpha}),
\end{split}
\end{displaymath}
and an analogous formula for $S_{\beta,b_1}A S_{\alpha,a_1}^{*}$, it is easy to verify that there exist functions $\chi\in K_{\alpha}$ and $\psi\in K_{\beta}$ such that
\begin{equation*}
A-S_{\beta,b_1}A S_{\alpha,a_1}^{*}=A-S_{\beta,b}A S_{\alpha,a}^{*}+ \psi\otimes k_{0}^{\alpha}+k_{0}^{\beta}\otimes\chi.
\end{equation*}
This implies the equivalence of \eqref{con1} and \eqref{con11}.

In other words, instead of the condition \eqref{con1} we can consider the condition \eqref{con11} with a suitable $a_1$, $b_1$.


Fix $\lambda_1,\lambda_2\in\partial\mathbb{D}$ and let
$$a_1=\frac{\alpha(0)+\lambda_1}{1-|\alpha(0)|^2}\quad\mathrm{and}\quad b_1=\frac{\beta(0)+\lambda_2}{1-|\beta(0)|^2}.$$
Since
$$S_{\alpha,a_1}=U_{\lambda_1}^{\alpha}\quad\mathrm{and}\quad S_{\beta,b_1}=U_{\lambda_2}^{\beta},$$
where $U_{\lambda_1}^{\alpha}$ and $U_{\lambda_2}^{\beta}$ are the Clark operators defined by \eqref{1uj} and \eqref{2uj}, the condition \eqref{con11} becomes
\begin{equation}\label{con2}
A-U_{\lambda_2}^{\beta}A (U_{\lambda_1}^{\alpha})^{*}=\psi\otimes k_{0}^{\alpha}+k_{0}^{\beta}\otimes\chi,
\end{equation}
with $\chi=\chi_{a_1,b_1}\in K_{\alpha}$ and $\psi=\psi_{a_1,b_1}\in K_{\beta}$.

Let $\{v_{\eta_1}^{\alpha},\ldots,v_{\eta_m}^{\alpha}\}$ and $\{v_{\zeta_1}^{\beta},\ldots,v_{\zeta_n}^{\beta}\}$ be the Clark bases of $K_{\alpha}$ and $K_{\beta}$ corresponding to $\lambda_1$ and $\lambda_2$, respectively. Moreover, let $M_A=(r_{s,p})$ be the matrix representation of $A$ with respect to these bases.

We now show that $A\in\mathscr{T}(\alpha,\beta)$ if and only if it satisfies \eqref{con2}. To this end we, express the condition \eqref{con2} in terms of the matrix representation $M_A$ and use Theorem \ref{macierz3}.

The operators in \eqref{con2} are equal if and only if their matrix representations with respect to $\{v_{\eta_1}^{\alpha},\ldots,v_{\eta_m}^{\alpha}\}$ and $\{v_{\zeta_1}^{\beta},\ldots,v_{\zeta_n}^{\beta}\}$ are equal. The latter holds if and only if
\begin{equation}\label{x1}
	\begin{array}{l}
	\langle Av_{\eta_p}^{\alpha},v_{\zeta_s}^{\beta}\rangle-\langle A (U_{\lambda_1}^{\alpha})^{*}v_{\eta_p}^{\alpha},(U_{\lambda_2}^{\beta})^{*}v_{\zeta_s}^{\beta}\rangle\\
	\phantom{\langle Av_{\eta_p}^{\alpha},\rangle}=\langle v_{\eta_p}^{\alpha}, k_{0}^{\alpha}\rangle\langle\psi,v_{\zeta_s}^{\beta}\rangle+\langle v_{\eta_p}^{\alpha},\chi\rangle\langle k_{0}^{\beta},v_{\zeta_s}^{\beta}\rangle
	\end{array}\quad\mathrm{for\ all}\quad \begin{array}{l}
1\leq p\leq m\\
\phantom{1} 1\leq s\leq n\end{array}.
\end{equation}
Recall that
$$r_{s,p}=\langle Av_{\eta_p}^{\alpha},v_{\zeta_s}^{\beta}\rangle.$$
This, together with
$$(U_{\lambda_1}^{\alpha})^{*}v_{\eta_p}^{\alpha}=\overline{\eta}_pv_{\eta_p}^{\alpha},\quad (U_{\lambda_2}^{\beta})^{*}v_{\zeta_s}^{\beta}=\overline{\zeta}_sv_{\zeta_s}^{\beta},$$
and the reproducing property of the eigenvectors of $U_{\lambda_1}^{\alpha}$ and $U_{\lambda_2}^{\beta}$, implies that the system of equations \eqref{x1} is equivalent to the following
\begin{equation}\label{x2}
\begin{array}{l}
(1-\overline{\eta}_p\zeta_s)r_{s,p}\sqrt{|\alpha'(\eta_p)|}\sqrt{|\beta'(\zeta_s)|}\\
\phantom{(1-\overline{\eta}_p\zeta_s)r_{s,p}}=\psi(\zeta_s)\overline{k_{0}^{\alpha}(\eta_p)}+k_{0}^{\beta}(\zeta_s)\overline{\chi(\eta_p)}\end{array},\quad
\begin{array}{l}
1\leq p\leq m\\
\phantom{1} 1\leq s\leq n
\end{array}.
\end{equation}


Assume that $A$ satisfies \eqref{con2} with some $\chi\in K_{\alpha}$ and $\psi\in K_{\beta}$. By the above, the matrix representation $M_A=(r_{s,p})$ satisfies \eqref{x2}. If $l=0$, then
\begin{displaymath}
	\begin{split}
	 \left(\frac{\sqrt{|\alpha'(\eta_1)|}}{\sqrt{|\alpha'(\eta_p)|}}\frac{\eta_p}{\eta_1}\right.&\left.\frac{\eta_1-\zeta_s}{\eta_p-\zeta_s}r_{s,1}+\frac{\sqrt{|\alpha'(\eta_1)|}\sqrt{|\beta'(\zeta_1)|}}{\sqrt{|\alpha'(\eta_p)|}\sqrt{|\beta'(\zeta_s)|}}\frac{\eta_p}{\eta_1}\frac{\zeta_1-\eta_1}{\eta_p-\zeta_s}r_{1,1}\right.\\
	 &\left.+\frac{\sqrt{|\beta'(\zeta_1)|}}{\sqrt{|\beta'(\zeta_s)|}}\frac{\eta_p-\zeta_1}{\eta_p-\zeta_s}r_{1,p}\right)=\frac{\psi(\zeta_s)\overline{k_{0}^{\alpha}(\eta_1)}+k_{0}^{\beta}(\zeta_1)\overline{\chi(\eta_p)}}{\sqrt{|\alpha'(\eta_p)|}\sqrt{|\beta'(\zeta_s)|}(1-\overline{\eta}_p\zeta_s)}\\
	 &=\frac{\psi(\zeta_s)\overline{k_{0}^{\alpha}(\eta_p)}+k_{0}^{\beta}(\zeta_s)\overline{\chi(\eta_p)}}{\sqrt{|\alpha'(\eta_p)|}\sqrt{|\beta'(\zeta_s)|}(1-\overline{\eta}_p\zeta_s)}	 =r_{s,p},
	\end{split}
\end{displaymath}
for all $1\leq p\leq m$ and $1\leq s\leq n$. This follows from \eqref{x2} and the fact that
$$k_{0}^{\alpha}(\eta_p)=1-\overline{\alpha(0)}\alpha_{\lambda_1}=k_{0}^{\alpha}(\eta_1)\quad\mathrm{and}\quad k_{0}^{\beta}(\zeta_s)=1-\overline{\beta(0)}\alpha_{\lambda_2}=k_{0}^{\beta}(\zeta_1).$$
By Theorem \ref{macierz3}, $A$ belongs to $\mathscr{T}(\alpha,\beta)$.

If $l>0$, then \eqref{x2} implies that
$$\psi(\zeta_s)\overline{k_{0}^{\alpha}(\eta_s)}+k_{0}^{\beta}(\zeta_s)\overline{\chi(\eta_s)}=0$$
for all $1\leq s\leq l$. It follows that
\begin{equation*}
\begin{split}
\left(\frac{\sqrt{|\alpha'(\eta_s)|}}{\sqrt{|\alpha'(\eta_p)|}}\right.&\left.\frac{\sqrt{|\beta'(\zeta_1)|}}{\sqrt{|\beta'(\zeta_s)|}}\frac{\eta_p}{\eta_s}\frac{\eta_1-\zeta_s}{\eta_p-\zeta_s}r_{1,s}+\frac{\sqrt{|\beta'(\zeta_1)|}}{\sqrt{|\beta'(\zeta_s)|}}\frac{\eta_p-\zeta_1}{\eta_p-\zeta_s}r_{1,p}\right)\\
&=\frac{\psi(\zeta_1)\overline{k_{0}^{\alpha}(\eta_p)}+k_{0}^{\beta}(\zeta_1)\overline{\chi(\eta_p)}-\psi(\zeta_1)\overline{k_{0}^{\alpha}(\eta_s)}-k_{0}^{\beta}(\zeta_1)\overline{\chi(\eta_s)}}{\sqrt{|\alpha'(\eta_p)|}\sqrt{|\beta'(\zeta_s)|}(1-\overline{\eta}_p\zeta_s)}=r_{s,p}
\end{split}
\end{equation*} for all $1\leq p \leq m$, $1\leq  s\leq l$, $s\neq p$. Similarly,
\begin{equation*}
\begin{split}
\left(\frac{\sqrt{|\alpha'(\eta_1)|}}{\sqrt{|\alpha'(\eta_p)|}}\frac{\eta_p}{\eta_1}\right.&\left.\frac{\eta_1-\zeta_s}{\eta_p-\zeta_s}r_{s,1}+\frac{\sqrt{|\beta'(\zeta_1)|}}{\sqrt{|\beta'(\zeta_s)|}}\frac{\eta_p-\zeta_1}{\eta_p-\zeta_s}r_{1,p}\right)\\
&=\frac{\psi(\zeta_s)\overline{k_{0}^{\alpha}(\eta_1)}+k_{0}^{\beta}(\zeta_s)\overline{\chi(\eta_1)}+\psi(\zeta_1)\overline{k_{0}^{\alpha}(\eta_p)}+k_{0}^{\beta}(\zeta_1)\overline{\chi(\eta_p)}}{\sqrt{|\alpha'(\eta_p)|}\sqrt{|\beta'(\zeta_s)|}(1-\overline{\eta}_p\zeta_s)}=r_{s,p}.
\end{split}
\end{equation*}
for all $1\leq p \leq m$ and $l<  s\leq n$. By Theorem \ref{macierz3} again, $A$ belongs to $\mathscr{T}(\alpha,\beta)$.

Assume now that $A$ belongs to $\mathscr{T}(\alpha,\beta)$. By the above, to show that there exist $\chi\in K_{\alpha}$ and $\psi\in K_{\beta}$ such that \eqref{con2} holds it is enough to find finite sequances $\{\chi_1,\ldots,\chi_m\}$ and $\{\psi_1,\ldots,\psi_n\}$ such that
\begin{equation}\label{x3}
\psi_s\overline{k_{0}^{\alpha}(\eta_p)}+k_{0}^{\beta}(\zeta_s)\overline{\chi_p}=(1-\overline{\eta}_p\zeta_s)r_{s,p}\sqrt{|\alpha'(\eta_p)|}\sqrt{|\beta'(\zeta_s)|},\quad
\begin{array}{l}
1\leq p\leq m\\
\phantom{1} 1\leq s\leq n
\end{array}.
\end{equation}
Indeed, if $\{\chi_1,\ldots,\chi_m\}$ and $\{\psi_1,\ldots,\psi_n\}$ satisfy \eqref{x3}, then $A$ satisfies \eqref{con2} with $\chi$ and $\psi$ given by
\begin{equation}\label{cp}
\chi=\sum_{p=1}^m\frac{\chi_p}{\sqrt{|\alpha'(\eta_p)|}}v_{\eta_p}^{\alpha},\quad \psi=\sum_{s=1}^n\frac{\psi_s}{\sqrt{|\beta'(\zeta_s)|}}v_{\zeta_s}^{\beta}.
\end{equation}
We now find such $\{\chi_1,\ldots,\chi_m\}$ and $\{\psi_1,\ldots,\psi_n\}$.

If $l=0$, then, by Theorem \ref{macierz3}(a), we only need to consider those equations from \eqref{x3}, which correspond to the elements $r_{1,p}$ and $r_{s,1}$ of the matrix representation of $A$. That is, $\{\chi_1,\ldots,\chi_m\}$ and $\{\psi_1,\ldots,\psi_n\}$ satisfy \eqref{x3} if and only if they satisfy the following system of equations
\begin{equation}\label{x4}
\begin{cases}
\psi_1\overline{k_{0}^{\alpha}(\eta_p)}+k_{0}^{\beta}(\zeta_1)\overline{\chi_p}=(1-\overline{\eta}_p\zeta_1)r_{1,p}\sqrt{|\alpha'(\eta_p)|}\sqrt{|\beta'(\zeta_1)|}&\mathrm{for\ all}\ 1\leq p\leq m\\
\psi_s\overline{k_{0}^{\alpha}(\eta_1)}+k_{0}^{\beta}(\zeta_s)\overline{\chi_1}=(1-\overline{\eta}_1\zeta_s)r_{s,1}\sqrt{|\alpha'(\eta_1)|}\sqrt{|\beta'(\zeta_s)|}&\mathrm{for\ all}\ 1< s\leq n
\end{cases}.
\end{equation}
This can easily be verified using \eqref{c1}.

Fix an arbitrary $\psi_1$. Then the desired sequences are
\begin{equation*}
\begin{cases}
\chi_p=\frac{(1-\eta_p\overline{\zeta}_1)\overline{r}_{1,p}\sqrt{|\alpha'(\eta_p)|}\sqrt{|\beta'(\zeta_1)|}-\overline{\psi}_1k_{0}^{\alpha}(\eta_p)}{\overline{k_{0}^{\beta}(\zeta_1)}}&\mathrm{for}\quad 1\leq p\leq m\\
\psi_s=\frac{(1-\overline{\eta}_1\zeta_s)r_{s,1}\sqrt{|\alpha'(\eta_1)|}\sqrt{|\beta'(\zeta_s)|}-k_{0}^{\beta}(\zeta_s)\overline{\chi_1}}{\overline{k_{0}^{\alpha}(\eta_1)}}&\mathrm{for}\quad 1< s\leq n
\end{cases},
\end{equation*}
and the functions $\chi$ and $\psi$ in \eqref{con2} are given by \eqref{cp}.

Similarly, if $l>0$, then, by Theorem \ref{macierz3}(b), $\{\chi_1,\ldots,\chi_m\}$ and $\{\psi_1,\ldots,\psi_n\}$ satisfy \eqref{x3} if and only if they satisfy
\begin{equation}\label{x5}
\begin{cases}
\psi_1\overline{k_{0}^{\alpha}(\eta_p)}+k_{0}^{\beta}(\zeta_1)\overline{\chi_p}=(1-\overline{\eta}_p\zeta_1)r_{1,p}\sqrt{|\alpha'(\eta_p)|}\sqrt{|\beta'(\zeta_1)|}&\mathrm{for\ all}\quad 1\leq p\leq m\\
\psi_s\overline{k_{0}^{\alpha}(\eta_s)}+k_{0}^{\beta}(\zeta_s)\overline{\chi_s}=0&\mathrm{for\ all}\quad 1\leq s\leq l\\
\psi_s\overline{k_{0}^{\alpha}(\eta_1)}+k_{0}^{\beta}(\zeta_s)\overline{\chi_1}=(1-\overline{\eta}_1\zeta_s)r_{s,1}\sqrt{|\alpha'(\eta_1)|}\sqrt{|\beta'(\zeta_s)|}&\mathrm{for\ all}\quad l< s\leq n
\end{cases}
\end{equation}
(consider those equations from \eqref{x3} with $r_{1,p}$ for $1\leq p\leq m$, $r_{s,s}$ for $1\leq s\leq l$ and $r_{s,1}$ for $l<s\leq n$).
Fix an arbitrary $\psi_1$. Then
\begin{equation*}
\begin{cases}
\chi_p=\frac{(1-\eta_p\overline{\zeta}_1)\overline{r}_{1,p}\sqrt{|\alpha'(\eta_p)|}\sqrt{|\beta'(\zeta_1)|}-\overline{\psi}_1k_{0}^{\alpha}(\eta_p)}{\overline{k_{0}^{\beta}(\zeta_1)}}&\mathrm{for}\quad 1\leq p\leq m\\
\psi_s=-\frac{k_{0}^{\beta}(\zeta_s)\overline{\chi_s}}{\overline{k_{0}^{\alpha}(\eta_s)}}&\mathrm{for}\quad 1\leq s\leq l\\
\psi_s=\frac{(1-\overline{\eta}_1\zeta_s)r_{s,1}\sqrt{|\alpha'(\eta_1)|}\sqrt{|\beta'(\zeta_s)|}-k_{0}^{\beta}(\zeta_s)\overline{\chi_1}}{\overline{k_{0}^{\alpha}(\eta_1)}}&\mathrm{for}\quad l< s\leq n
\end{cases}.
\end{equation*}
Again, $A$ satisfies \eqref{con2} with $\chi$ and $\psi$ given by \eqref{cp}. This completes the proof.
\end{proof}

\begin{cor}\label{crit2}
Let $\alpha$, $\beta$ be two finite Blaschke products and let $a$, $b$ be two complex numbers. A linear operator $A$ from $K_{\alpha}$ into $K_{\beta}$ belongs to $\mathscr{T}(\alpha,\beta)$ if and only if there are functions $\chi_{a,b}\in K_{\alpha}$ and $\psi_{a,b}\in K_{\beta}$ such that
\begin{equation}\label{con3}
A-S_{\beta,b}^{*}A S_{\alpha,a}=\psi_{a,b}\otimes \widetilde{k}_{0}^{\alpha}+\widetilde{k}_{0}^{\beta}\otimes\chi_{a,b}.
\end{equation}
\end{cor}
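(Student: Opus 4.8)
The plan is to deduce Corollary~\ref{crit2} from Theorem~\ref{crit} by transporting the defining equation through the conjugations $C_\alpha$ and $C_\beta$, which by \eqref{numerek3} are antilinear isometric involutions on $L^2(\partial\mathbb{D})$ preserving $K_\alpha$ and $K_\beta$. The ingredients I would assemble first are elementary identities. Since $\widetilde{k}_0^\alpha=C_\alpha k_0^\alpha$ and $C_\alpha$ is an involution, also $C_\alpha\widetilde{k}_0^\alpha=k_0^\alpha$ (and likewise for $\beta$). For rank-one operators one checks directly that $C_\beta(f\otimes g)C_\alpha=C_\beta f\otimes C_\alpha g$ whenever $f\in K_\beta$, $g\in K_\alpha$, using $\langle C_\alpha h,g\rangle=\langle C_\alpha g,h\rangle$. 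Finally, starting from $C_\alpha S_\alpha C_\alpha=S_\alpha^{*}$ and applying the rank-one rule together with antilinearity (which turns the coefficient $a$ into $\overline{a}$), one obtains $C_\alpha S_{\alpha,a}C_\alpha=S_{\alpha,a}^{*}$ and hence $C_\alpha S_{\alpha,a}^{*}C_\alpha=S_{\alpha,a}$; the same holds for $\beta$, $b$.

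With these in hand, set $B=C_\beta A C_\alpha$, a linear operator from $K_\alpha$ into $K_\beta$. Conjugating \eqref{con3} by $C_\beta(\cdot)C_\alpha$ and inserting $C_\beta C_\beta=I$, $C_\alpha C_\alpha=I$ in the middle term gives
\[
C_\beta S_{\beta,b}^{*}A S_{\alpha,a}C_\alpha=(C_\beta S_{\beta,b}^{*}C_\beta)\,B\,(C_\alpha S_{\alpha,a}C_\alpha)=S_{\beta,b}\,B\,S_{\alpha,a}^{*},
\]
so the left-hand side of \eqref{con3} becomes $B-S_{\beta,b}B S_{\alpha,a}^{*}$, exactly the left side of \eqref{con1}. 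The right-hand side transforms, via the rank-one rule and $C_\alpha\widetilde{k}_0^\alpha=k_0^\alpha$, $C_\beta\widetilde{k}_0^\beta=k_0^\beta$, into $C_\beta\psi_{a,b}\otimes k_0^\alpha+k_0^\beta\otimes C_\alpha\chi_{a,b}$. Thus $A$ satisfies \eqref{con3} with some $\chi_{a,b}\in K_\alpha$, $\psi_{a,b}\in K_\beta$ if and only if $B$ satisfies \eqref{con1} with the functions $C_\beta\psi_{a,b}\in K_\beta$ and $C_\alpha\chi_{a,b}\in K_\alpha$; since $C_\alpha$, $C_\beta$ are bijections these range over all of $K_\alpha$ and $K_\beta$ as $\chi_{a,b}$, $\psi_{a,b}$ do. By Theorem~\ref{crit}, the latter holds for some admissible pair if and only if $B\in\mathscr{T}(\alpha,\beta)$.

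It remains to show that $A\in\mathscr{T}(\alpha,\beta)$ if and only if $B=C_\beta A C_\alpha\in\mathscr{T}(\alpha,\beta)$, and this is the only genuine point of the argument. I would prove the stronger statement $C_\beta A_\varphi^{\alpha,\beta}C_\alpha=A_{\beta\overline{\alpha}\,\overline{\varphi}}^{\alpha,\beta}$ for every symbol $\varphi$. Since $C_\beta$ preserves $K_\beta$ and swaps $\beta H^2$ with $\overline{zH^2}$, it preserves $K_\beta^{\perp}$ and therefore commutes with $P_\beta$ on $L^2(\partial\mathbb{D})$; combined with $C_\beta(\varphi h)=\overline{\varphi}\,C_\beta h$ and the pointwise identity $C_\beta C_\alpha f=\beta\overline{\alpha}f$ (immediate from \eqref{numerek3}), one computes, for $f\in K_\alpha^{\infty}$,
\[
C_\beta A_\varphi^{\alpha,\beta}C_\alpha f=C_\beta P_\beta(\varphi\,C_\alpha f)=P_\beta\big(\overline{\varphi}\,C_\beta C_\alpha f\big)=P_\beta\big(\beta\overline{\alpha}\,\overline{\varphi}\,f\big)=A_{\beta\overline{\alpha}\,\overline{\varphi}}^{\alpha,\beta}f.
\]
Because $\beta\overline{\alpha}\,\overline{\varphi}\in L^2(\partial\mathbb{D})$ and $C_\alpha$, $C_\beta$ are isometric (so $\|C_\beta A C_\alpha\|=\|A\|$), the operator $C_\beta A C_\alpha$ lies in $\mathscr{T}(\alpha,\beta)$ whenever $A$ does; as $A\mapsto C_\beta A C_\alpha$ is an involution on operators from $K_\alpha$ to $K_\beta$, the converse is automatic. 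This gives $A\in\mathscr{T}(\alpha,\beta)\Leftrightarrow B\in\mathscr{T}(\alpha,\beta)$, closing the chain of equivalences. I expect the main obstacle to be exactly this conjugation-invariance: one must verify the commutation $C_\beta P_\beta=P_\beta C_\beta$ on $L^2(\partial\mathbb{D})$ and take due care with the densely defined operators on $K_\alpha^{\infty}$ before extending by boundedness.
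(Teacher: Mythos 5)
Your proposal is correct and follows essentially the same route as the paper: conjugate the equation by $C_\beta(\cdot)C_\alpha$, use the identities $C_\beta S_{\beta,b}C_\beta=S_{\beta,b}^{*}$, $C_\alpha S_{\alpha,a}^{*}C_\alpha=S_{\alpha,a}$ and the rank-one transformation rule to convert \eqref{con3} for $A$ into \eqref{con1} for $B=C_\beta AC_\alpha$, and reduce to Theorem \ref{crit} via the invariance $A\in\mathscr{T}(\alpha,\beta)\Leftrightarrow C_\beta AC_\alpha\in\mathscr{T}(\alpha,\beta)$ with the same symbol $\overline{\alpha\varphi}\beta$. The only (immaterial) difference is that you establish this invariance through the commutation $C_\beta P_\beta=P_\beta C_\beta$ and a pointwise computation, whereas the paper verifies it by testing inner products $\langle C_\beta AC_\alpha f,g\rangle$ against $f\in K_\alpha^{\infty}$, $g\in K_\beta^{\infty}$.
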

\begin{proof}
We first prove that the linear operator $A$ belongs to $\mathscr{T}(\alpha,\beta)$ if and only if $B=C_{\beta}AC_{\alpha}$ belongs to $\mathscr{T}(\alpha,\beta)$.

If $A$ belongs to $\mathscr{T}(\alpha,\beta)$, then $A=A_{\varphi}^{\alpha,\beta}$ for some $\varphi\in L^2(\partial\mathbb{D})$, and for every $f\in K_{\alpha}^{\infty}$ and $g\in K_{\beta}^{\infty}$,
\begin{displaymath}
\begin{split}
\langle Bf,g\rangle &=\langle C_{\beta}AC_{\alpha}f,g\rangle=\langle C_{\beta}g,AC_{\alpha}f\rangle\\
&=\langle \beta \overline{z g}, \varphi\alpha \overline{z f}\rangle=\langle \overline{\alpha\varphi}\beta f,g\rangle=\langle A_{\overline{\alpha\varphi}\beta}^{\alpha,\beta}f,g\rangle.
\end{split}
\end{displaymath}
Hence
$$B=C_{\beta}AC_{\alpha}=A_{\overline{\alpha\varphi}\beta}^{\alpha,\beta}.$$
On the other hand, if $B=C_{\beta}AC_{\alpha}$ belongs to $\mathscr{T}(\alpha,\beta)$, then, by the above, $C_{\beta}BC_{\alpha}$ also belongs to $\mathscr{T}(\alpha,\beta)$. However,
$$C_{\beta}BC_{\alpha}=C_{\beta}^2AC_{\alpha}^2=A,$$
and $A\in\mathscr{T}(\alpha,\beta)$.

We now prove that $A\in\mathscr{T}(\alpha,\beta)$ if and only if \eqref{con3} holds.

Assume that $A\in\mathscr{T}(\alpha,\beta)$. Then $C_{\beta}AC_{\alpha}\in\mathscr{T}(\alpha,\beta)$ and, by Theorem \ref{crit}, there are $\chi\in K_{\alpha}$ and $\psi\in K_{\beta}$ such that
\begin{equation}\label{con4}
C_{\beta}AC_{\alpha}-S_{\beta,b}C_{\beta}AC_{\alpha} S_{\alpha,a}^{*}=\psi\otimes k_{0}^{\alpha}+k_{0}^{\beta}\otimes\chi.
\end{equation}
Since
$$C_{\beta}S_{\beta,b}C_{\beta}=S_{\beta,b}^{*}\quad\mathrm{and}\quad C_{\alpha}S_{\alpha,a}^{*}C_{\alpha}=S_{\alpha,a},$$
we get
\begin{displaymath}
\begin{split}
A-S_{\beta,b}^{*}A S_{\alpha,a}&=C_{\beta}^2AC_{\alpha}^2-C_{\beta}S_{\beta,b}C_{\beta}A C_{\alpha}S_{\alpha,a}^{*}C_{\alpha}\\
&=C_{\beta}(\psi\otimes k_{0}^{\alpha}+k_{0}^{\beta}\otimes\chi)C_{\alpha}=\widetilde{\psi}\otimes \widetilde{k}_{0}^{\alpha}+\widetilde{k}_{0}^{\beta}\otimes\widetilde{\chi}.
\end{split}
\end{displaymath}
Thus $A$ satisfies \eqref{con3} with
$$\chi_{a,b}=\widetilde{\chi}\quad\mathrm{and}\quad \psi_{a,b}=\widetilde{\psi}.$$

A similar reasoning shows that if $A$ satisfies \eqref{con3}, then $C_{\beta}AC_{\alpha}$ satisfies \eqref{con1}. By Theorem \ref{crit}, $C_{\beta}AC_{\alpha}\in\mathscr{T}(\alpha,\beta)$ and so $A\in\mathscr{T}(\alpha,\beta)$. This completes the proof.

\end{proof}

\begin{cor}
If $\alpha$, $\beta$ are two finite Blaschke products, then $\mathscr{T}(\alpha,\beta)$ is closed in the weak operator topology.
\end{cor}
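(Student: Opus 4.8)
The plan is to reduce the statement to the observation that, in finite dimensions, the weak operator topology on the space $\mathcal{L}(K_\alpha,K_\beta)$ of all linear maps from $K_\alpha$ into $K_\beta$ coincides with the norm topology (equivalently, with entrywise convergence of matrix representations), and then to exhibit $\mathscr{T}(\alpha,\beta)$ as the preimage of a closed set under a continuous linear map, using the characterization from Theorem \ref{crit}.

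First I would fix two complex numbers $a,b$ and define the map $\Phi\colon\mathcal{L}(K_\alpha,K_\beta)\to\mathcal{L}(K_\alpha,K_\beta)$ by $\Phi(A)=A-S_{\beta,b}AS_{\alpha,a}^{*}$. Since $\dim K_\alpha=m$ and $\dim K_\beta=n$ are finite, $\mathcal{L}(K_\alpha,K_\beta)$ is a finite-dimensional vector space and $\Phi$, being linear, is automatically continuous. Next I would introduce the set
$$\mathcal{R}=\{\psi\otimes k_0^\alpha+k_0^\beta\otimes\chi\ \colon\ \psi\in K_\beta,\ \chi\in K_\alpha\},$$
which is the image of the linear map $(\psi,\chi)\mapsto\psi\otimes k_0^\alpha+k_0^\beta\otimes\chi$ and is therefore a linear subspace of $\mathcal{L}(K_\alpha,K_\beta)$; as a subspace of a finite-dimensional space it is closed.

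By Theorem \ref{crit} we have precisely $\mathscr{T}(\alpha,\beta)=\Phi^{-1}(\mathcal{R})$. Hence $\mathscr{T}(\alpha,\beta)$ is the preimage of a closed set under a continuous map, and is therefore closed in the norm topology of $\mathcal{L}(K_\alpha,K_\beta)$. Because the underlying spaces are finite-dimensional, this norm topology agrees with the weak operator topology, so $\mathscr{T}(\alpha,\beta)$ is weakly closed, as claimed.

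There is no genuine obstacle here; the only point requiring care is the identification of the relevant topologies. One should explicitly invoke finite-dimensionality to conclude that the weak operator topology coincides with the norm topology, since in the general infinite-dimensional setting weak closedness is strictly stronger and would not follow from norm closedness alone. I would also remark that an even shorter argument is available: the map $\varphi\mapsto A_\varphi^{\alpha,\beta}$ is linear in $\varphi$, so $\mathscr{T}(\alpha,\beta)$ is itself a linear subspace of the finite-dimensional space $\mathcal{L}(K_\alpha,K_\beta)$ and is closed for that reason alone. The route through Theorem \ref{crit} is preferable only in that it keeps the corollary within the framework developed in this section.
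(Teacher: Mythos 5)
Your proof is correct, and its skeleton --- realizing $\mathscr{T}(\alpha,\beta)$ via Theorem~\ref{crit} as the preimage of the rank-two set $\mathcal{R}$ under the defect map $\Phi(A)=A-S_{\beta,b}AS_{\alpha,a}^{*}$ --- is the same reduction the paper intends; the difference is in how the topology is handled. The paper gives no details and simply points to Sarason's proof of \cite[Thm.~4.2]{s}, which argues directly in the weak operator topology: $\Phi$ is WOT-continuous, and $\mathcal{R}$ is WOT-closed because $B\in\mathcal{R}$ precisely when $\langle Bf,g\rangle=0$ for all $f\perp k_{0}^{\alpha}$ and $g\perp k_{0}^{\beta}$, i.e.\ $\mathcal{R}$ is an intersection of kernels of WOT-continuous functionals; that argument never uses finite-dimensionality. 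You instead invoke finite-dimensionality to identify the weak operator topology with the norm (entrywise) topology, which trivializes both the continuity of $\Phi$ and the closedness of $\mathcal{R}$; this is perfectly legitimate here, since the standing assumption of the paper is that $\alpha$ and $\beta$ are finite Blaschke products. Your closing remark is the genuinely different, and shortest, route: in this setting every $A_{\varphi}^{\alpha,\beta}$ is defined on all of $K_{\alpha}$, the map $\varphi\mapsto A_{\varphi}^{\alpha,\beta}$ is linear, so $\mathscr{T}(\alpha,\beta)$ is a linear subspace of the finite-dimensional space $\mathcal{L}(K_{\alpha},K_{\beta})$ (of dimension $m+n-1$ by \cite[Prop.~2.1]{blicharz2}, as the paper notes in Section 4), hence closed in every Hausdorff vector topology; this is more elementary but, unlike the Sarason-style argument, does not survive outside finite dimensions. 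One small inaccuracy worth fixing: the parametrizing map $(\psi,\chi)\mapsto\psi\otimes k_{0}^{\alpha}+k_{0}^{\beta}\otimes\chi$ is linear in $\psi$ but conjugate-linear in $\chi$, since $f\otimes g$ is conjugate-linear in $g$; its image $\mathcal{R}$ is nonetheless a complex linear subspace (replace $\chi$ by $\overline{c}\chi$ when scaling by $c$), so your conclusion stands.
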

\begin{proof}
The proof is analogous to that of \cite[Thm. 4.2]{s} and is thus left for the reader.
\end{proof}

In \cite[p. 512]{s} D. Sarason considered the notion of shift invariance of operators on $K_{\alpha}$. In \cite{ptak2} this notion was generalized to operators form $K_{\alpha}$ into $K_{\beta}$. A bounded linear operator from $K_{\alpha}$ into $K_{\beta}$ is said to be shift invariant if
\begin{equation}\label{szin}
\langle ASf,Sg\rangle=\langle Af,g\rangle
\end{equation}
for all $f\in K_{\alpha}$ and $g\in K_{\beta}$ such that $Sf\in K_{\alpha}$ and $Sg\in K_{\beta}$.

The authors in \cite{ptak2} proved that if $\alpha$ and $\beta$ are two inner functions such that $\beta$ divides $\alpha$, then the set of all shift invariant operators form $K_{\alpha}$ into $K_{\beta}$ is equal to $\mathscr{T}(\alpha,\beta)$. For finite Blaschke products we have the following.

\begin{thm}
Let $\alpha$, $\beta$ be two finite Blaschke products and let $A$ be a linear operator from $K_{\alpha}$ into $K_{\beta}$. Then $A$ belongs to $\mathscr{T}(\alpha,\beta)$ if and only if $A$ is shift invariant.
\end{thm}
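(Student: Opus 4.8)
The plan is to reduce the statement to Corollary \ref{crit2} with $a=b=0$, after first pinning down which vectors are admissible in the definition \eqref{szin} of shift invariance.

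\emph{Step 1 (which $f$ satisfy $Sf\in K_\alpha$).} I would first record the elementary geometric fact that, for $f\in K_\alpha$,
\[
Sf=S_\alpha f+\langle f,\widetilde{k}_0^\alpha\rangle\,\alpha .
\]
This follows from $zK_\alpha\subseteq K_{z\alpha}=K_\alpha\oplus\mathbb{C}\alpha$ (the inclusion because $\langle zf,z\alpha h\rangle=\langle f,\alpha h\rangle=0$, and the orthogonal decomposition because $K_{z\alpha}\ominus K_\alpha$ consists of the scalar multiples of $\alpha$): writing $zf=S_\alpha f+c_f\alpha$ and pairing with $\alpha$ gives $c_f=\langle zf,\alpha\rangle=\langle f,\widetilde{k}_0^\alpha\rangle$. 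Consequently $Sf\in K_\alpha$ if and only if $f\perp\widetilde{k}_0^\alpha$, and in that case $Sf=S_\alpha f$. The same holds for $\beta$: $Sg\in K_\beta$ iff $g\perp\widetilde{k}_0^\beta$, and then $Sg=S_\beta g$.

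\emph{Step 2 (reformulating shift invariance).} Using Step 1, the pairs $(f,g)$ occurring in \eqref{szin} are exactly those with $f\in K_\alpha\ominus\mathbb{C}\widetilde{k}_0^\alpha$ and $g\in K_\beta\ominus\mathbb{C}\widetilde{k}_0^\beta$, and for such pairs $\langle ASf,Sg\rangle=\langle AS_\alpha f,S_\beta g\rangle=\langle S_\beta^{*}AS_\alpha f,g\rangle$. Hence, setting $D=A-S_\beta^{*}AS_\alpha$, shift invariance is equivalent to $\langle Df,g\rangle=0$ for all $f\perp\widetilde{k}_0^\alpha$ and $g\perp\widetilde{k}_0^\beta$, i.e.\ to the single geometric condition
\[
D\bigl(K_\alpha\ominus\mathbb{C}\widetilde{k}_0^\alpha\bigr)\subseteq\mathbb{C}\widetilde{k}_0^\beta .
\]

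\emph{Step 3 (matching the rank-two form).} By Corollary \ref{crit2} applied with $a=b=0$ (so that $S_{\alpha,a}=S_\alpha$ and $S_{\beta,b}=S_\beta$), $A\in\mathscr{T}(\alpha,\beta)$ if and only if $D=\psi\otimes\widetilde{k}_0^\alpha+\widetilde{k}_0^\beta\otimes\chi$ for some $\chi\in K_\alpha$, $\psi\in K_\beta$. It remains to check that this rank-two form is equivalent to the inclusion from Step 2. One direction is immediate: if $D$ has that form, then for $f\perp\widetilde{k}_0^\alpha$ one has $Df=\langle f,\chi\rangle\widetilde{k}_0^\beta\in\mathbb{C}\widetilde{k}_0^\beta$. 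For the converse, the hypothesis says the restriction of $D$ to $K_\alpha\ominus\mathbb{C}\widetilde{k}_0^\alpha$ is a $\widetilde{k}_0^\beta$-valued linear map; by the Riesz representation I choose $\chi$ with $Df=\langle f,\chi\rangle\widetilde{k}_0^\beta$ on that subspace, so that $D-\widetilde{k}_0^\beta\otimes\chi$ annihilates $K_\alpha\ominus\mathbb{C}\widetilde{k}_0^\alpha$; any operator annihilating this codimension-one subspace is of the form $\psi\otimes\widetilde{k}_0^\alpha$, which yields the desired decomposition and hence $A\in\mathscr{T}(\alpha,\beta)$.

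The one genuinely substantive point is Step 1 — identifying the admissible test vectors and the clean formula for $Sf$; once the problem is phrased as the inclusion $D(K_\alpha\ominus\mathbb{C}\widetilde{k}_0^\alpha)\subseteq\mathbb{C}\widetilde{k}_0^\beta$, everything else is linear algebra plus the appeal to Corollary \ref{crit2}. I would also keep an eye on the degenerate cases $m=1$ or $n=1$, where $K_\alpha\ominus\mathbb{C}\widetilde{k}_0^\alpha=\{0\}$ or $\mathbb{C}\widetilde{k}_0^\beta=K_\beta$; there both shift invariance and membership in $\mathscr{T}(\alpha,\beta)$ hold automatically, so the equivalence persists.
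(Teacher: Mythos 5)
Your proof is correct, but it takes a genuinely different route from the paper, which in fact offers no argument at all here: the paper's ``proof'' of this theorem is the single line ``See \cite{ptak2}'', deferring to a preprint whose result, as quoted earlier in the paper, is stated for general inner functions under the hypothesis that $\beta$ divides $\alpha$. You instead give a self-contained derivation inside the paper's own framework. Your Step 1 identity $Sf=S_{\alpha}f+\langle f,\widetilde{k}_{0}^{\alpha}\rangle\alpha$ (from $K_{z\alpha}=K_{\alpha}\oplus\mathbb{C}\alpha$) is correct and correctly identifies the admissible test vectors in \eqref{szin} as $f\perp\widetilde{k}_{0}^{\alpha}$, $g\perp\widetilde{k}_{0}^{\beta}$; shift invariance then becomes the inclusion $D\bigl(K_{\alpha}\ominus\mathbb{C}\widetilde{k}_{0}^{\alpha}\bigr)\subseteq\mathbb{C}\widetilde{k}_{0}^{\beta}$ for $D=A-S_{\beta}^{*}AS_{\alpha}$, and your linear-algebra step (Riesz representation on the codimension-one subspace, plus the fact that an operator annihilating that subspace has the form $\psi\otimes\widetilde{k}_{0}^{\alpha}$, using $\widetilde{k}_{0}^{\alpha}\neq0$ since $m>0$) correctly shows this inclusion is equivalent to the rank-two form $D=\psi\otimes\widetilde{k}_{0}^{\alpha}+\widetilde{k}_{0}^{\beta}\otimes\chi$, which is exactly the $a=b=0$ case of Corollary \ref{crit2}. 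The degenerate cases $m=1$ or $n=1$ are also handled correctly, since your argument degenerates gracefully there. What your route buys: a proof that works for arbitrary finite Blaschke products with no divisibility assumption, stays entirely within results proved in the paper, and makes transparent that shift invariance is a coordinate-free reformulation of the rank-two characterization with $a=b=0$. What the paper's route buys: brevity, and (through the cited reference) methods that are not confined to the finite-dimensional setting.
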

\begin{proof}
See \cite{ptak2}.
\end{proof}

\section{Rank-one asymmetric truncated Toeplitz operators}

In Section 5 of \cite{s} D. Sarason characterized all the truncated Toeplitz operators of rank-one. He proved (see \cite[Thm. 5.1]{s}) that every rank-one operator form $\mathscr{T}(\alpha)$ is a scalar multiple of $\widetilde{k}_w^{\alpha}\otimes {k}_w^{\alpha}$ or $k_w^{\alpha}\otimes \widetilde{k}_w^{\alpha}$ for some $w\in \overline{\mathbb{D}}$. Can a similar characterization be given for asymmetric truncated Toeplitz operators of rank-one?

It is known (see for instance \cite[Prop. 3.1]{blicharz1}) that the operators $\widetilde{k}_w^{\beta}\otimes {k}_w^{\alpha}$ and $k_w^{\beta}\otimes \widetilde{k}_w^{\alpha}$ belong to $\mathscr{T}(\alpha,\beta)$ for all $w\in \mathbb{D}$ and for such $w\in \partial\mathbb{D}$ that $\alpha$ and $\beta$ have an angular derivative in the sense of Carath\'{e}odory at $w$. So a natural question is: is every rank-one operator from $\mathscr{T}(\alpha,\beta)$ a scalar multiple of one of these? A simple example shows that this is not always the case.

\begin{exmp}
Fix $a\in\mathbb{D}$, $a\neq 0$, and let
$$\alpha(z)=-z\frac{a-z}{1-\overline{a}z}\frac{a+z}{1+\overline{a}z},\quad \beta(z)=z.$$
Then $\beta$ divides $\alpha$, $K_{\beta}$ is a one-dimensional linear space spanned by $k_0^{\beta}=1$ and $K_{\alpha}$ is a three-dimensional linear space spanned by $k_0^{\alpha}=1$, $k_a^{\alpha}=k_a$ and $k_{-a}^{\alpha}=k_{-a}$. Moreover, since $\alpha$ and $\beta$ are analytic in a domain containing $\overline{\mathbb{D}}$, the operators $\widetilde{k}_w^{\beta}\otimes {k}_w^{\alpha}$ and $k_w^{\beta}\otimes \widetilde{k}_w^{\alpha}$ belong to $\mathscr{T}(\alpha,\beta)$ for all $w\in \overline{\mathbb{D}}$. 

It is easy to verify that $A=1\otimes(1+k_a)$ is an element of $\mathscr{T}(\alpha,\beta)$. In fact,
$$A=1\otimes(1+k_a)=A_{1+\overline{k}_a}^{\alpha,\beta}.$$
We show that there is no $w\in\overline{\mathbb{D}}$ such that $A$ is a scalar multiple of $\widetilde{k}_w^{\beta}\otimes {k}_w^{\alpha}$ or
$k_w^{\beta}\otimes \widetilde{k}_w^{\alpha}$.

Assume first that
\begin{equation}\label{row1}
1\otimes(1+k_a)=c(\widetilde{k}_w^{\beta}\otimes {k}_w^{\alpha})
\end{equation}
for some $w\in\overline{\mathbb{D}}$ and $c\neq0$. Since $k_w^{\beta}=\widetilde{k}_w^{\beta}=1$, the above implies that
$$1+k_a=\overline{c}k_w^{\alpha}.$$
Equivalently,
\begin{equation}\label{uklad1}
\left\{\begin{array}{lcl}
\langle 1+k_a,1\rangle&=&\langle \overline{c}k_w^{\alpha},1\rangle\\
\langle 1+k_a,k_a\rangle&=&\langle \overline{c}k_w^{\alpha},k_a\rangle\\
\langle 1+k_a,k_{-a}\rangle&=&\langle \overline{c}k_w^{\alpha},k_{-a}\rangle
\end{array}\right..
\end{equation}
The first equation in \eqref{uklad1} implies that $c=2$. By the second equation in \eqref{uklad1},
$$1+\frac1{1-|a|^2}=\frac2{1-\overline{w}a},$$
and
$$w=\frac{a}{2-|a|^2}.$$
However, by the third equation,
$$1+\frac1{1+|a|^2}=\frac2{1+\overline{w}a},$$
and
$$w=\frac{a}{2+|a|^2}.$$
Clearly, $\frac{a}{2-|a|^2}=\frac{a}{2+|a|^2}$ if and only if $a=0$, and thus there is no $w\in\overline{\mathbb{D}}$ for which \eqref{row1} holds.

Assume then that
\begin{equation}\label{row2}
1\otimes(1+k_a)=c({k}_w^{\beta}\otimes \widetilde{k}_w^{\alpha})
\end{equation}
for some $w\in\overline{\mathbb{D}}$ and $c\neq0$. As before, we must have
$$1+k_a=\overline{c}\widetilde{k}_w^{\alpha},$$
which is equivalent to the following system of equations
\begin{equation}\label{uklad2}
\left\{\begin{array}{lcl}
\langle 1+k_a,1\rangle&=&\langle \overline{c}\widetilde{k}_w^{\alpha},1\rangle\\
\langle 1+k_a,k_a\rangle&=&\langle \overline{c}\widetilde{k}_w^{\alpha},k_a\rangle\\
\langle 1+k_a,k_{-a}\rangle&=&\langle \overline{c}\widetilde{k}_w^{\alpha},k_{-a}\rangle
\end{array}\right..
\end{equation}
Since $\langle\widetilde{k}_w^{\alpha},1\rangle=\alpha(w)/w$, the first equation in \eqref{uklad2} gives
\begin{equation}\label{ee1}
2=-\overline{c}\frac{a-w}{1-\overline{a}w}\frac{a+w}{1+\overline{a}w}.
\end{equation}
Similarly,
$$\langle \widetilde{k}_w^{\alpha},k_{a}\rangle=\frac{\alpha(w)}{w-a},\quad \langle \widetilde{k}_w^{\alpha},k_{-a}\rangle=\frac{\alpha(w)}{w+a},$$
and the second and third equation in \eqref{uklad2} yield
\begin{equation}\label{ee2}
1+\frac1{1-|a|^2}=\overline{c}\frac{w}{1-\overline{a}w}\frac{a+w}{1+\overline{a}w},
\end{equation}
and
\begin{equation}\label{ee3}
1+\frac1{1+|a|^2}=-\overline{c}\frac{w}{1+\overline{a}w}\frac{a-w}{1-\overline{a}w},
\end{equation}
respectively. By \eqref{ee1} and \eqref{ee2},
$$w=\frac{2-|a|^2}{\overline{a}}.$$
However, by \eqref{ee1} and \eqref{ee3},
$$w=\frac{2+|a|^2}{\overline{a}}.$$
As before, there is no $w\in\overline{\mathbb{D}}$ for which \eqref{row2} holds.
\end{exmp}

Note that in the above example every linear operator form $K_{\alpha}$ into $K_{\beta}$ is a rank-one operator and belongs to $\mathscr{T}(\alpha,\beta)$. This happens for all finite Blaschke products $\alpha$, $\beta$ of degree $m$, $n$, respectively, and such that $m=1$ or $n=1$. Indeed, if $\alpha$ and $\beta$ are finite Blaschke products of degree $m$ and $n$, respectively, then the dimension of $\mathscr{T}(\alpha,\beta)$ is $m+n-1$ (see \cite[Prop. 2.1]{blicharz2}). It follows that if $m=1$ or $n=1$, then
$$\dim \mathscr{T}(\alpha,\beta)=m+n-1=mn.$$
Since $mn$ is the dimension of $\mathcal{L}(\alpha,\beta)$, the space of all linear operators form $K_{\alpha}$ into $K_{\beta}$, we get $\mathscr{T}(\alpha,\beta)=\mathcal{L}(\alpha,\beta)$.

So if one of the spaces $K_{\alpha}$, $K_{\beta}$ is one-dimensional, then the set of rank-one asymmetric truncated Toeplitz operators is "quite big". Is it ever "big enough" to contain operators which are not scalar multiples of $\widetilde{k}_w^{\beta}\otimes {k}_w^{\alpha}$ or
$k_w^{\beta}\otimes \widetilde{k}_w^{\alpha}$? To answer this we need the following.

\begin{prop}\label{pepe}
Let $\alpha$ be a finite Blaschke product of degree $m>0$. Every $f\in K_{\alpha}$ is a scalar multiple of a reproducing kernel or a conjugate kernel if and only if $m\leq 2$.
\end{prop}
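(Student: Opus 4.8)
The plan is to use the standard model of a finite Blaschke product: writing $\alpha$ as a degree-$m$ Blaschke product with denominator $D(z)=\prod_{j=1}^m(1-\overline{a_j}z)$, one has $K_\alpha=\{p/D:\deg p\le m-1\}$, so a nonzero $f\in K_\alpha$ is determined up to a scalar by the zeros of its numerator, and in particular by the number of zeros it has in $\mathbb{D}$. I would first dispose of the trivial case $m=1$: then $\dim K_\alpha=1$, every nonzero function is a scalar multiple of every other, hence of any reproducing kernel.

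For the direction $m\ge 3\Rightarrow$ failure, I would compute how many zeros the two families of kernels have in $\mathbb{D}$. For $z\in\mathbb{D}$ and any $w\in\overline{\mathbb{D}}$ one has $|\overline{\alpha(w)}\alpha(z)|<1$, so the numerator $1-\overline{\alpha(w)}\alpha(z)$ of $k_w^\alpha$ never vanishes in $\mathbb{D}$; thus every reproducing kernel has no zero in $\mathbb{D}$. For a conjugate kernel, $\widetilde{k}_w^\alpha(z)=(\alpha(z)-\alpha(w))/(z-w)$, and counting the fiber of $\alpha$ over $\alpha(w)$ shows that $\widetilde{k}_w^\alpha$ has exactly $m-1$ zeros in $\mathbb{D}$ when $w\in\mathbb{D}$ and none when $w\in\partial\mathbb{D}$ (the fiber then sits on $\partial\mathbb{D}$). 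Hence the number of zeros in $\mathbb{D}$ of any kernel of either type lies in $\{0,m-1\}$. Since $m\ge 3$ forces $m-1\ge 2$, the function $f=(z-b)/D$ with $b\in\mathbb{D}$, which has exactly one zero in $\mathbb{D}$, is a scalar multiple of no kernel of either type.

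For the direction $m\le 2\Rightarrow$ success, the case $m=2$ is the substance. Here every nonzero $f\in K_\alpha$ has numerator of degree $\le 1$, so it is determined up to a scalar by the single zero $\rho\in\widehat{\mathbb{C}}$ of that numerator; thus it suffices to show that the zeros of the two kernel families together exhaust $\widehat{\mathbb{C}}$. I would introduce the reflection $\psi(z)=1/\overline{z}$, which maps $\overline{\mathbb{D}}$ bijectively onto the closed exterior $E=\{|z|\ge 1\}\cup\{\infty\}$, and the order-two deck involution $\iota$ of the degree-two map $\alpha$ (sending a point to the other point of its $\alpha$-fiber), which is a M\"obius transformation preserving $\mathbb{D}$, $\partial\mathbb{D}$ and $E$ by a modulus argument. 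Using the Blaschke reflection identity $\alpha(1/\overline{z})=1/\overline{\alpha(z)}$, I would identify the zero of $k_w^\alpha$ as $\iota(\psi(w))$ and the zero of $\widetilde{k}_w^\alpha$ as $\iota(w)$. Letting $w$ range over $\overline{\mathbb{D}}$ then makes the reproducing-kernel zeros $\iota(\psi(\overline{\mathbb{D}}))=\iota(E)=E$ fill the closed exterior, while letting $w$ range over $\mathbb{D}$ makes the conjugate-kernel zeros $\iota(\mathbb{D})=\mathbb{D}$ fill the open disk; together they cover $\widehat{\mathbb{C}}$, which completes this direction.

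The main obstacle is the surjectivity step in the case $m=2$: one must be sure that every admissible numerator-zero is genuinely attained, rather than merely that the two families have zeros in complementary regions. The clean resolution is the involution $\iota$, which is a global bijection of $\widehat{\mathbb{C}}$; it absorbs all the potential edge cases (the boundary values $w\in\partial\mathbb{D}$, the zeros of $\alpha$, and the two critical points of $\alpha$, where the fiber degenerates) into the single uniform formulas $\rho=\iota(\psi(w))$ and $\rho=\iota(w)$, so that surjectivity onto $E$ and onto $\mathbb{D}$ is immediate by solving $w=\psi(\iota(\sigma))$ and $w=\iota(\sigma)$ for a prescribed target zero $\sigma$.
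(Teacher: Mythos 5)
Your proof is correct, and it takes a genuinely different route from the paper's. The paper argues entirely in a Clark basis: it computes the coefficients of $k_w^{\alpha}$ and $\widetilde{k}_w^{\alpha}$ with respect to $\{v_{\eta_1}^{\alpha},\ldots,v_{\eta_m}^{\alpha}\}$ and observes that such a coefficient vector is either everywhere nonzero (when $w\notin\{\eta_1,\ldots,\eta_m\}$) or has exactly one nonzero entry (when $w=\eta_{j_0}$); for $m>2$ a combination such as $v_{\eta_1}^{\alpha}+v_{\eta_2}^{\alpha}$ violates both patterns, while for $m=2$ the paper solves explicitly for $w$, obtaining candidate solutions $w_1$ and $w_2=1/\overline{w}_1$, one of which must lie in $\overline{\mathbb{D}}$. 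You instead work in the polynomial model $K_{\alpha}=\{p/D\colon \deg p\leq m-1\}$. Your $m\geq 3$ direction is shorter and more conceptual than the paper's: reproducing kernels have no zeros in $\mathbb{D}$, conjugate kernels have $0$ or $m-1$ of them, so $(z-b)/D$ with $b\in\mathbb{D}$, having exactly one, can match neither. Your $m=2$ direction, via the deck involution $\iota$ and the reflection $\psi(z)=1/\overline{z}$, is precisely the conceptual explanation of the paper's reciprocal pair $w_1$, $w_2=1/\overline{w}_1$: the conjugate-kernel zeros fill $\mathbb{D}$, the reproducing-kernel zeros fill the closed exterior, and together they exhaust all admissible numerator zeros. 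What the paper's route buys is uniformity with the rest of the article (the same Clark-basis bookkeeping underlies Theorem \ref{macierz3} and Lemma \ref{leon}) and purely finite-dimensional linear algebra; what yours buys is brevity and geometric transparency. In a final write-up you should make two standard facts explicit: that the fiber-swap $\iota$ of a degree-two rational map $P/Q$ really is a M\"obius involution (factor $P(z)Q(w)-P(w)Q(z)=(z-w)G(z,w)$ with $G$ symmetric of bidegree $(1,1)$ and solve $G=0$ for $z$), and that $\iota\circ\psi=\psi\circ\iota$, which follows from the reflection identity $\alpha\circ\psi=\psi\circ\alpha$ and is what makes your formula $\iota(\psi(w))$ for the zero of $k_w^{\alpha}$ valid also in the degenerate configurations (for instance $\alpha(w)=0$, or $w$ for which the numerator degree drops and the ``missing'' zero sits at $\infty$).
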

\begin{proof}
Let $\{v_{\eta_1}^{\alpha},\ldots,v_{\eta_m}^{\alpha}\}$ be the Clark basis of $K_{\alpha}$, corresponding to a fixed $\lambda\in\partial\mathbb{D}$.

We first prove that if every $f\in K_{\alpha}$ is a scalar multiple of a reproducing kernel or a conjugate kernel, then $m\leq 2$. To this end, we show that if $m>2$, then there exists $f\in K_{\alpha}$ that is neither a scalar multiple of a reproducing kernel nor a scalar multiple of a conjugate kernel.

Observe, that if
$$k_w^{\alpha}=c_1v_{\eta_1}^{\alpha}+\ldots+c_mv_{\eta_m}^{\alpha}$$
for some $w\in\overline{\mathbb{D}}$, then

\begin{displaymath}
\begin{split}
c_j&=\langle k_w^{\alpha},v_{\eta_j}^{\alpha}\rangle=\|k_{\eta_j}^{\alpha}\|^{-1}k_w^{\alpha}(\eta_j)\\
&=\frac1{\sqrt{|\alpha'(\eta_j)|}}\frac{1-\overline{\alpha(w)}\alpha(\eta_j)}{1-\overline{w}\eta_j}=\frac1{\sqrt{|\alpha'(\eta_j)|}}\frac{1-\overline{\alpha(w)}\alpha_{\lambda}}{1-\overline{w}\eta_j}.
\end{split}
\end{displaymath}
Hence, if $w\notin\{\eta_1,\ldots,\eta_m\}$, then $\alpha(w)\neq\alpha_{\lambda}$ and $c_j\neq0$ for all $1\leq j\leq m$. On the other hand, if $w=\eta_{j_0}$ for some $1\leq j_0\leq m$, then
\begin{equation}\label{cj}
c_j=\left\{\begin{array}{cl}
0&\mathrm{for}\ j\neq j_0\\
\sqrt{|\alpha'(\eta_{j_0})|}&\mathrm{for}\ j= j_0
\end{array}\right..
\end{equation}

Similarly, if
$$\widetilde{k}_w^{\alpha}=c_1v_{\eta_1}^{\alpha}+\ldots+c_mv_{\eta_m}^{\alpha}$$
for some $w\in\overline{\mathbb{D}}$, then
$$c_j=\langle \widetilde{k}_w^{\alpha},v_{\eta_j}^{\alpha}\rangle=\frac1{\sqrt{|\alpha'(\eta_j)|}}\frac{\alpha_{\lambda}-\alpha(w)}{\eta_j-w},$$
and $c_j\neq0$ for all $1\leq j\leq m$ if $w\notin\{\eta_1,\ldots,\eta_m\}$, or $c_j$'s satisfy \eqref{cj} if $w=\eta_{j_0}$ for some $1\leq j_0\leq m$.

Consider a linear combination
$$f=c_1v_{\eta_1}^{\alpha}+\ldots+c_mv_{\eta_m}^{\alpha}.$$
By the above, if $f$ is a scalar multiple of $k_w^{\alpha}$ or $\widetilde{k}_w^{\alpha}$, then all the coefficients $c_j$ are nonzero or there is precisely one nonzero coefficient $c_{j_0}$. If $m>2$ and $f$ is a linear combination as above with at least one coefficient equal to zero and at least two nonzero coefficients (for example $c_1=c_2=1$ and $c_j=0$ for $j>2$), then $f$ is neither a scalar multiple of a reproducing kernel nor a scalar multiple of a conjugate kernel.

To complete the proof we show that if $m\leq 2$, then every $f\in K_{\alpha}$ is a scalar multiple of a reproducing kernel or a conjugate kernel. This is clear for $m=1$.


Let $m=2$ and take $f=c_1v_{\eta_1}^{\alpha}+c_2v_{\eta_2}^{\alpha}$. If $c_1c_2=0$, then $f$ is a scalar multiple of a reproducing kernel. Assume that $c_1c_2\neq0$. We find $w\in\overline{\mathbb{D}}$ such that $f$ is a scalar multiple of $k_w^{\alpha}$ or $\widetilde{k}_w^{\alpha}$.

If $f=ck_w^{\alpha}$, then
\begin{displaymath}
\left\{	\begin{array}{c}
c_1=\frac{c}{\sqrt{|\alpha'(\eta_1)|}}\frac{1-\overline{\alpha(w)}\alpha_{\lambda}}{1-\overline{w}\eta_1}\\
c_2=\frac{c}{\sqrt{|\alpha'(\eta_2)|}}\frac{1-\overline{\alpha(w)}\alpha_{\lambda}}{1-\overline{w}\eta_2}
	\end{array}\right..
	\end{displaymath}
	From this
	$$\frac{1-\overline{w}\eta_1}{1-\overline{w}\eta_2}=\frac{\sqrt{|\alpha'(\eta_2)|}}{\sqrt{|\alpha'(\eta_1)|}}\frac{c_2}{c_1},$$
	and
	\begin{equation}\label{w1}
	w(\overline{\eta}_1-\overline{w}_0\overline{\eta}_2)=1-\overline{w}_0,
	\end{equation}
	where $w_0=\frac{\sqrt{|\alpha'(\eta_2)|}}{\sqrt{|\alpha'(\eta_1)|}}\frac{c_2}{c_1}$.
	
	On the other hand, if $f=c_1v_{\eta_1}^{\alpha}+c_2v_{\eta_2}^{\alpha}=c\widetilde{k}_w^{\alpha}$, then
$$\overline{c}_1\alpha_{\lambda}\overline{\eta}_1v_{\eta_1}^{\alpha}+\overline{c}_2\alpha_{\lambda}\overline{\eta}_2v_{\eta_2}^{\alpha}=\overline{c}k_w^{\alpha}.$$
As before,
\begin{displaymath}
\left\{	\begin{array}{c}
\overline{c}_1\alpha_{\lambda}\overline{\eta}_1=\frac{\overline{c}}{\sqrt{|\alpha'(\eta_1)|}}\frac{1-\overline{\alpha(w)}\alpha_{\lambda}}{1-\overline{w}\eta_1}\\
\overline{c}_2\alpha_{\lambda}\overline{\eta}_2=\frac{\overline{c}}{\sqrt{|\alpha'(\eta_2)|}}\frac{1-\overline{\alpha(w)}\alpha_{\lambda}}{1-\overline{w}\eta_2}
	\end{array}\right.
	\end{displaymath}
	and
	 $$\frac{\overline{\eta}_1-\overline{w}}{\overline{\eta}_2-\overline{w}}=\frac{\sqrt{|\alpha'(\eta_2)|}}{\sqrt{|\alpha'(\eta_1)|}}\frac{\overline{c}_2}{\overline{c}_1}=\overline{w}_0.$$
	From this
	\begin{equation}\label{w2}
	w(1-{w}_0)=\eta_1-{w}_0\eta_2.
	\end{equation}
	
Therefore, we must show that there exists $w\in\overline{\mathbb{D}}$ such that it satisfies \eqref{w1} (then $f$ is a scalar multiple of $k_w^{\alpha}$) or such that it satisfies \eqref{w2} (then $f$ is a scalar multiple of $\widetilde{k}_w^{\alpha}$). We consider three cases.
	
	\vspace{0.1cm}
	
	\textit{Case 1.} $1-w_0=0$.
	
	\vspace{0.1cm}
	
	In this case $w=0$ satisfies \eqref{w1}, $c_1\sqrt{|\alpha'(\eta_1)|}=c_2\sqrt{|\alpha'(\eta_2)|}$ and $f=ck_0^{\alpha}$ with
	$$c=\frac{c_1\sqrt{|\alpha'(\eta_1)|}}{1-\overline{\alpha(0)}\alpha_{\lambda}}=\frac{c_2\sqrt{|\alpha'(\eta_2)|}}{1-\overline{\alpha(0)}\alpha_{\lambda}}.$$
	
	\vspace{0.1cm}
	
	\textit{Case 2.} $\eta_1-w_0\eta_2=0$.
	
	\vspace{0.1cm}
	
	In this case $w=0$ satisfies \eqref{w2}, $c_1\eta_1\sqrt{|\alpha'(\eta_1)|}=c_2\eta_2\sqrt{|\alpha'(\eta_2)|}$ and $f=c\widetilde{k}_0^{\alpha}$ with
	$$c=\frac{c_1\eta_1\sqrt{|\alpha'(\eta_1)|}}{\alpha_{\lambda}-\alpha(0)}=\frac{c_2\eta_2\sqrt{|\alpha'(\eta_2)|}}{\alpha_{\lambda}-\alpha(0)}.$$
	
	\vspace{0.1cm}
	
	\textit{Case 3.} $1-w_0\neq0$ and $\eta_1-w_0\eta_2\neq0$.
	
	\vspace{0.1cm}
	
In this case \eqref{w1} has a solution given by
$$w_1=\frac{1-\overline{w}_0}{\overline{\eta}_1-\overline{w}_0\overline{\eta}_2}$$
and \eqref{w2} has a solution given by
$$w_2=\frac{{\eta}_1-{w}_0{\eta}_2}{1-{w}_0}.$$
Note that $w_2=1/\overline{w}_1$. Therefore, one of the numbers $w_1$, $w_2$ must be in $\overline{\mathbb{D}}$.

If $w_1\in\overline{\mathbb{D}}$, then $f=ck_{w_1}^{\alpha}$ with
	 $$c=c_1\sqrt{|\alpha'(\eta_1)|}\frac{1-\overline{w}_1\eta_1}{1-\overline{\alpha(w_1)}\alpha_{\lambda}}=c_2\sqrt{|\alpha'(\eta_2)|}\frac{1-\overline{w}_1\eta_2}{1-\overline{\alpha(w_1)}\alpha_{\lambda}}.$$

If $w_2\in\overline{\mathbb{D}}$, then $f=c\widetilde{k}_{w_2}^{\alpha}$ with
	$$c=c_1\sqrt{|\alpha'(\eta_1)|}\frac{\eta_1-w_2}{\alpha_{\lambda}-\alpha(w_2)}=c_2\sqrt{|\alpha'(\eta_2)|}\frac{\eta_2-w_2}{\alpha_{\lambda}-\alpha(w_2)}.$$
\end{proof}

\begin{cor}\label{mnkor}
Let $\alpha$ and $\beta$ be two finite Blaschke products of degree $m>0$ and $n>0$, respectively.
\begin{itemize}
\item[(a)] If $mn\leq2$, then every operator from $\mathscr{T}(\alpha,\beta)$ is a scalar multiple of $\widetilde{k}_w^{\beta}\otimes {k}_w^{\alpha}$ or ${k}_w^{\beta}\otimes \widetilde{k}_w^{\alpha}$ for some $w\in\overline{\mathbb{D}}$.
\item[(b)] If either $m=1$ and $n>2$, or $m>2$ and $n=1$, then there exists a rank-one operator from $\mathscr{T}(\alpha,\beta)$ that is neither a scalar multiple of $\widetilde{k}_w^{\beta}\otimes {k}_w^{\alpha}$ nor a scalar multiple of ${k}_w^{\beta}\otimes \widetilde{k}_w^{\alpha}$.
\end{itemize}
\end{cor}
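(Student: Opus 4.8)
The plan is to reduce everything to Proposition \ref{pepe} together with the observation (made just before the corollary) that whenever $m=1$ or $n=1$ one has $\mathscr{T}(\alpha,\beta)=\mathcal{L}(\alpha,\beta)$. Note first that in every case covered by the statement one of the two degrees equals $1$, so every operator $A$ from $K_{\alpha}$ into $K_{\beta}$ has rank at most one and automatically lies in $\mathscr{T}(\alpha,\beta)$. Thus a nonzero such $A$ may be written as $A=\psi\otimes\chi$ with $\psi\in K_{\beta}$ and $\chi\in K_{\alpha}$, and the problem becomes a question about the two factors $\psi$ and $\chi$: since scalars pass through $\otimes$ (conjugated on the second slot), a nonzero $A$ is a scalar multiple of $\widetilde{k}_w^{\beta}\otimes k_w^{\alpha}$ exactly when $\psi\in\mathbb{C}\widetilde{k}_w^{\beta}$ and $\chi\in\mathbb{C}k_w^{\alpha}$ \emph{for one common point} $w$, and analogously $A\in\mathbb{C}(k_w^{\beta}\otimes\widetilde{k}_w^{\alpha})$ exactly when $\psi\in\mathbb{C}k_w^{\beta}$ and $\chi\in\mathbb{C}\widetilde{k}_w^{\alpha}$.

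For part (a) the hypothesis $mn\le 2$ forces both $m\le 2$ and $n\le 2$, with (say) $n=1$; the case $m=1$ is symmetric. By Proposition \ref{pepe} the factor $\chi\in K_{\alpha}$ is a scalar multiple of $k_w^{\alpha}$ or of $\widetilde{k}_w^{\alpha}$ for some $w\in\overline{\mathbb{D}}$. The other factor then costs nothing: since $\dim K_{\beta}=1$ and $\widetilde{k}_w^{\beta},k_w^{\beta}$ are nonzero elements of $K_{\beta}$ for every $w$, the vector $\psi$ is automatically a scalar multiple of $\widetilde{k}_w^{\beta}$ \emph{and} of $k_w^{\beta}$ for this very same $w$. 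Hence if $\chi\in\mathbb{C}k_w^{\alpha}$ then $A\in\mathbb{C}(\widetilde{k}_w^{\beta}\otimes k_w^{\alpha})$, while if $\chi\in\mathbb{C}\widetilde{k}_w^{\alpha}$ then $A\in\mathbb{C}(k_w^{\beta}\otimes\widetilde{k}_w^{\alpha})$, which is the assertion (the zero operator being a trivial scalar multiple).

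For part (b) I treat the case $m=1$, $n>2$, the case $m>2$, $n=1$ being symmetric. Here $\dim K_{\alpha}=1$, so any nonzero $A=\psi\otimes\chi$ with $\psi\in K_{\beta}$ is rank-one and lies in $\mathscr{T}(\alpha,\beta)$. Since $n>2$, Proposition \ref{pepe} applied to $\beta$ furnishes a vector $\psi\in K_{\beta}$ that is neither a scalar multiple of any $k_w^{\beta}$ nor of any $\widetilde{k}_w^{\beta}$, $w\in\overline{\mathbb{D}}$. Fixing any nonzero $\chi\in K_{\alpha}$ and setting $A=\psi\otimes\chi$, I argue that $A$ is not of either target form: if $A$ were a scalar multiple of $\widetilde{k}_w^{\beta}\otimes k_w^{\alpha}$, then using $\dim K_{\alpha}=1$ to absorb $k_w^{\alpha}$ into $\chi$ and cancelling the common factor would force $\psi\in\mathbb{C}\widetilde{k}_w^{\beta}$, contradicting its choice; the identical argument excludes $k_w^{\beta}\otimes\widetilde{k}_w^{\alpha}$.

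The only genuinely delicate point is the matching of the \emph{single} parameter $w$ in part (a): Proposition \ref{pepe} controls each factor separately, but the target operators require kernels (or conjugate kernels) based at one common $w$. This is precisely what the one-dimensionality of the second model space resolves, since there a nonzero vector is proportional to the relevant kernel at every $w$ simultaneously. Part (b) then runs Proposition \ref{pepe} in the opposite direction, extracting from the higher-dimensional space a vector that no single kernel or conjugate kernel can represent and transporting it across the one-dimensional factor.
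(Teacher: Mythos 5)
Your proposal is correct and follows essentially the same route as the paper: both parts reduce to Proposition \ref{pepe} applied to the factor living in the higher-dimensional space, with the one-dimensionality of the other space supplying the matching kernel (and, via the dimension count $\dim\mathscr{T}(\alpha,\beta)=m+n-1=mn$, membership in $\mathscr{T}(\alpha,\beta)$) at every $w$ simultaneously. Your explicit treatment of the uniqueness of the rank-one factorization in part (b) is a detail the paper leaves implicit, but the argument is the same.
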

\begin{proof}
$(\mathrm{a})$ Let $f\in K_{\alpha}$ and $g\in K_{\beta}$ be such that $g\otimes f\in\mathscr{T}(\alpha,\beta)$.

If $m=n=1$, then $f=c_1{k}_0^{\alpha}$, $g=c_2\widetilde{k}_0^{\beta}$ and $g\otimes f$ is a scalar multiple of $\widetilde{k}_0^{\beta}\otimes {k}_0^{\alpha}$.

If $m=2$, $n=1$, then, by Proposition \ref{pepe}, there exists $w\in\overline{\mathbb{D}}$ such that $f$ is a scalar multiple of ${k}_w^{\alpha}$ or $\widetilde{k}_w^{\alpha}$. Since here $K_{\beta}$ is one-dimensional, $g$ is always a scalar multiple of $\widetilde{k}_w^{\beta}$ and of ${k}_w^{\beta}$. Hence $g\otimes f$ is a scalar multiple of $\widetilde{k}_w^{\beta}\otimes {k}_w^{\alpha}$ or ${k}_w^{\beta}\otimes \widetilde{k}_w^{\alpha}$.

A similar reasoning can be used for $m=1$, $n=2$.

$(\mathrm{b})$ Let $m=1$ and $n>2$. By Proposition \ref{pepe}, there exists $g\in K_{\beta}$ that $g$ is neither a scalar multiple of a reproducing kernel nor a scalar multiple of a conjugate kernel. The desired operator is $A=g\otimes f$ with any $f\in K_{\alpha}$, $f\neq 0$.

The proof for $m>2$, $n=1$ is similar.
\end{proof}

What happens if both $m$ and $n$ are greater than one? It turns out that in that case every rank-one operator from $\mathscr{T}(\alpha,\beta)$ is a scalar multiple of $\widetilde{k}_w^{\beta}\otimes {k}_w^{\alpha}$ or ${k}_w^{\beta}\otimes \widetilde{k}_w^{\alpha}$ for some $w\in\overline{\mathbb{D}}$. To give the proof we need the following.

\begin{lem}\label{leon}
Let $\alpha$ and $\beta$ be two finite Blaschke products of degree $m>1$ and $n>1$, respectively. Let $f\in K_{\alpha}$, $g\in K_{\beta}$ be two nonzero functions such that $g\otimes f$ belongs to $\mathscr{T}(\alpha,\beta)$ and let $w\in\overline{\mathbb{D}}$. Then
\begin{itemize}
\item[(a)] $g$ is a scalar multiple of $k_w^{\beta}$ if and only if $f$ is a scalar multiple of $\widetilde{k}_w^{\alpha}$,
\item[(b)] $g$ is a scalar multiple of $\widetilde{k}_w^{\beta}$ if and only if $f$ is a scalar multiple of $k_w^{\alpha}$.
\end{itemize}
\end{lem}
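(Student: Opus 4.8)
The plan is to exploit the rank-one structure together with the rank-two commutation identity from Corollary \ref{crit2} to force $f$ and $g$ into the required kernel form. Throughout I would work with the characterization \eqref{con3}, choosing the parameters $a=b=0$ so that the modified shifts reduce to the ordinary compressed shifts $S_\alpha$, $S_\beta$. Since $A=g\otimes f\in\mathscr{T}(\alpha,\beta)$, there are $\chi\in K_\alpha$ and $\psi\in K_\beta$ with
\begin{equation*}
g\otimes f-S_\beta^{*}(g\otimes f)S_\alpha=\psi\otimes\widetilde{k}_0^{\alpha}+\widetilde{k}_0^{\beta}\otimes\chi.
\end{equation*}
A direct computation gives $S_\beta^{*}(g\otimes f)S_\alpha=(S_\beta^{*}g)\otimes(S_\alpha^{*}f)$, so the left-hand side is $g\otimes f-(S_\beta^{*}g)\otimes(S_\alpha^{*}f)$, an operator of rank at most two. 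The idea is that the two sides of the identity express the same rank-$\le 2$ operator in terms of its range and co-range, and matching these will tie $f$ to $g$.

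For the forward direction of part (a), assume $g=c\,k_w^{\beta}$ for some scalar $c\ne 0$. The plan is to apply both sides of the identity to the appropriate test vectors and read off a differential/functional relation for $f$. Concretely, I would evaluate $\langle (g\otimes f)h,g'\rangle$-type pairings or, more efficiently, translate the operator identity into the scalar equation obtained by testing against reproducing kernels: for suitable points this yields an equation of the form $g(\zeta)\overline{f(\eta)}-(S_\beta^{*}g)(\zeta)\overline{(S_\alpha^{*}f)(\eta)}=\psi(\zeta)\overline{\widetilde{k}_0^{\alpha}(\eta)}+\widetilde{k}_0^{\beta}(\zeta)\overline{\chi(\eta)}$. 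Substituting $g=c\,k_w^{\beta}$ and using that $S_\beta^{*}k_w^{\beta}=\overline{w}\,k_w^{\beta}-\overline{w}\,k_0^\beta$ (equivalently the identity $z\,k_w^\beta = $ shift of a kernel), one sees the $\zeta$-dependence on the left collapses onto the span of $k_w^\beta$ and $k_0^\beta$; comparing with the right-hand side, whose $\zeta$-dependence lies in the span of $\widetilde{k}_0^\beta$ and $k_0^\beta$, constrains $\overline{f(\eta)}$ and $\overline{(S_\alpha^{*}f)(\eta)}$ to be proportional in a way that identifies $f$ as a multiple of $\widetilde{k}_w^{\alpha}$. Part (b) is entirely analogous with the roles of reproducing and conjugate kernels interchanged, and in fact follows from (a) by applying the conjugations: by the argument in Corollary \ref{crit2}, $g\otimes f\in\mathscr{T}(\alpha,\beta)$ iff $C_\beta(g\otimes f)C_\alpha=(C_\beta g)\otimes(C_\alpha f)=\widetilde{g}\otimes\widetilde{f}\in\mathscr{T}(\alpha,\beta)$, and since $C_\alpha k_w^\alpha=\widetilde{k}_w^\alpha$ and $C_\beta k_w^\beta=\widetilde{k}_w^\beta$, statement (b) for $(f,g)$ is exactly statement (a) for $(\widetilde{f},\widetilde{g})$.

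The main obstacle I anticipate is executing the ``comparison of spans'' cleanly without degenerate cases, and in particular handling the reverse implications. For the converse of (a) (if $f$ is a multiple of $\widetilde{k}_w^\alpha$ then $g$ is a multiple of $k_w^\beta$), the symmetric computation must be run with the roles of $f$ and $g$ swapped, which requires knowing that $f=c\,\widetilde{k}_w^\alpha$ forces the $\eta$-dependence of the identity onto a two-dimensional space spanned by $\widetilde{k}_w^\alpha$ and $\widetilde{k}_0^\alpha$; the point where $w$ sits on the unit circle (so that $k_w^\alpha$, $\widetilde{k}_w^\alpha$ are genuine elements of the space via the angular-derivative hypothesis) needs separate but routine verification. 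The crucial structural fact making everything work is that $m>1$ and $n>1$ guarantee the relevant kernels $k_0^\beta,\widetilde{k}_0^\beta$ (resp.\ $k_0^\alpha,\widetilde{k}_0^\alpha$) are linearly independent, so the span-matching is nondegenerate; this is exactly where the hypothesis on the degrees is used, and I would make sure to invoke it explicitly before concluding the proportionality that pins down the kernel.
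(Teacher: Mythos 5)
Your route---deriving the lemma from the commutation identity \eqref{con3} of Corollary \ref{crit2} rather than from Clark-basis matrix representations---is genuinely different from the paper's proof, which expands $r_{s,p}=g(\zeta_s)\overline{f(\eta_p)}/(\sqrt{|\beta'(\zeta_s)|}\sqrt{|\alpha'(\eta_p)|})$ and runs a long case analysis through Theorem \ref{macierz3}. Your reductions are also sound: $S_\beta^{*}(g\otimes f)S_\alpha=(S_\beta^{*}g)\otimes(S_\alpha^{*}f)$ is correct, the swap $g\otimes f\in\mathscr{T}(\alpha,\beta)\Leftrightarrow f\otimes g\in\mathscr{T}(\beta,\alpha)$ handles the converses (the paper uses the same remark), and the conjugation argument reducing (b) to (a) is correct and in fact cleaner than the paper, which redoes all cases for $g=\widetilde{k}_w^{\beta}$. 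However, the core of the forward implication is not carried out, and the one formula you supply to drive it is false.

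Concretely: $S_\beta^{*}k_w^{\beta}=\overline{w}k_w^{\beta}-\overline{w}k_0^{\beta}$ is wrong; the correct identity is $S_\beta^{*}k_w^{\beta}=\overline{w}\,k_w^{\beta}-\overline{\beta(w)}\,\widetilde{k}_0^{\beta}$ (what you wrote resembles $S_\beta k_w^{\beta}=\overline{w}^{-1}(k_w^{\beta}-k_0^{\beta})$, valid for $w\neq0$). The error matters, because $k_0^{\beta}$ does not occur on the right of \eqref{con3}, so your proposed comparison of ``span of $\widetilde{k}_0^{\beta}$ and $k_0^{\beta}$'' cannot close; also the range of the right-hand side lies in the span of $\psi$ and $\widetilde{k}_0^{\beta}$, with $\psi$ \emph{unknown}, so one must first force $\psi$ into $\mathrm{span}\{k_w^{\beta},\widetilde{k}_0^{\beta}\}$ before matching coefficients. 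With the correct formula, taking $g=k_w^{\beta}$ turns \eqref{con3} into
\begin{equation*}
k_w^{\beta}\otimes\bigl(f-wS_\alpha^{*}f\bigr)=\psi\otimes\widetilde{k}_0^{\alpha}+\widetilde{k}_0^{\beta}\otimes\bigl(\chi-\beta(w)S_\alpha^{*}f\bigr),
\end{equation*}
and the nondegeneracy needed is the linear independence of $k_w^{\beta}$ and $\widetilde{k}_0^{\beta}$ for \emph{every} $w\in\overline{\mathbb{D}}$ (provable, e.g., from the Clark coordinates of $k_w^{\beta}$)---this, not the independence of $k_0^{\beta}$ and $\widetilde{k}_0^{\beta}$, is where $n>1$ enters. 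More seriously, the comparison only yields $(I-wS_\alpha^{*})f=c\,\widetilde{k}_0^{\alpha}$ for some scalar $c$, and the step from here to ``$f$ is a multiple of $\widetilde{k}_w^{\alpha}$'' is asserted, not proved. You need two further facts: the identity $(I-wS_\alpha^{*})\widetilde{k}_w^{\alpha}=\widetilde{k}_0^{\alpha}$, and the invertibility of $I-wS_\alpha^{*}$ on $K_\alpha$ for all $w\in\overline{\mathbb{D}}$, \emph{including} $|w|=1$; the latter holds precisely because $\alpha$ is a finite Blaschke product, so that $\sigma(S_\alpha)$ is the finite zero set of $\alpha$, contained in $\mathbb{D}$. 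Without these, the ``proportionality'' you invoke does not identify $f$, and this closing step is exactly the crux of the lemma. Once these pieces are supplied, your argument does go through and gives a proof noticeably shorter than the paper's.
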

\begin{proof}
Fix $\lambda_1,\lambda_2\in\partial\mathbb{D}$. Let $\{v_{\eta_1}^{\alpha},\ldots,v_{\eta_m}^{\alpha}\}$ be the Clark basis for $K_{\alpha}$ corresponding to $\lambda_1$ and let $\{v_{\zeta_1}^{\beta},\ldots,v_{\zeta_n}^{\beta}\}$ be the Clark basis for $K_{\beta}$ corresponding to $\lambda_2$. Assume moreover that the sets $\{\eta_1,\ldots,\eta_m\}$, $\{\zeta_1,\ldots,\zeta_n\}$ have precisely $l$ elements in common, these elements being $\eta_j=\zeta_j$, $j\leq l$.

Assume that $f\in K_{\alpha}$, $g\in K_{\beta}$ are such that $g\otimes f$ belongs to $\mathscr{T}(\alpha,\beta)$ and let $(r_{s,p})$ be the matrix representation  of $g\otimes f$ with respect to $\{v_{\eta_1}^{\alpha},\ldots,v_{\eta_m}^{\alpha}\}$ and $\{v_{\zeta_1}^{\beta},\ldots,v_{\zeta_n}^{\beta}\}$.

We only show that if $g=k_w^{\beta}$, then $f$ is a scalar multiple of $\widetilde{k}_w^{\alpha}$, and that if $g=\widetilde{k}_w^{\beta}$, then $f$ is a scalar multiple of $k_w^{\alpha}$. The proof in the general case is just a matter of multiplying by a constant. Moreover, the other implications then follow, because $g\otimes f$ belongs to $\mathscr{T}(\alpha,\beta)$ if and only if $f\otimes g$ belongs to $\mathscr{T}(\beta, \alpha)$.


We first give the proof for $l=0$, that is, when $\{\eta_1,\ldots,\eta_m\}$ and $\{\zeta_1,\ldots,\zeta_n\}$ have no elements in common.

By Theorem \ref{macierz3}, the matrix representation $(r_{s,p})$ satisfies \eqref{c1}. Since
$$r_{s,p}=\langle (g\otimes f)v_{\eta_p}^{\alpha},v_{\zeta_s}^{\beta}\rangle=\langle g,v_{\zeta_s}^{\beta}\rangle\langle v_{\eta_p}^{\alpha},f\rangle=\frac{g(\zeta_s)\overline{f(\eta_p)}}{\sqrt{|\beta'(\zeta_s)|}\sqrt{|\alpha'(\eta_p)|}},$$
the equation \eqref{c1} can be written as
$$g(\zeta_s)\overline{f(\eta_p)}=\frac{1-\overline{\eta}_1\zeta_s}{1-\overline{\eta}_p\zeta_s}g(\zeta_s)\overline{f(\eta_1)}-\frac{1-\overline{\eta}_1\zeta_1}{1-\overline{\eta}_p\zeta_s}g(\zeta_1)\overline{f(\eta_1)}+\frac{1-\overline{\eta}_p\zeta_1}{1-\overline{\eta}_p\zeta_s}g(\zeta_1)\overline{f(\eta_p)},$$
or
\begin{equation}\label{gefen}
[a_{s,p}g(\zeta_s)-a_{1,p}g(\zeta_1)]\overline{f(\eta_p)}=[a_{s,1}g(\zeta_s)-a_{1,1}g(\zeta_1)]\overline{f(\eta_1)},
\end{equation}
where
$$a_{s,p}=1-\overline{\eta}_p\zeta_s,\quad 1\leq p\leq m,\ 1\leq s\leq n.$$

Let $g=k_w^{\beta}$ with $w\in\overline{\mathbb{D}}$. We need to consider three cases.

\vspace{0.1cm}
	
\textit{Case 1.} $w=\zeta_{s_0}$ \textit{for some} $1\leq s_0\leq n$.
	
\vspace{0.1cm}

In this case $g(\zeta_{s_0})\neq0$ and $g(\zeta_s)=0$ for all $s\neq s_0$. If $s_0>1$, then \eqref{gefen} (with $s=s_0$) implies that
\begin{equation}\label{gzy}
a_{s_0,p}g(\zeta_{s_0})\overline{f(\eta_p)}=a_{s_0,1}g(\zeta_{s_0})\overline{f(\eta_1)}
\end{equation}
for all $1\leq p\leq m$. If $s_0=1$, then \eqref{gzy} also follows from \eqref{gefen} (with $s=n$ for example). Since $a_{s_0,p}\neq0$, we get
\begin{displaymath}
\begin{split}
f(\eta_p)&=\frac{\overline{a}_{s_0,1}}{\overline{a}_{s_0,p}}f(\eta_1)=\frac{1-\eta_1\overline{\zeta}_{s_0}}{1-\eta_p\overline{\zeta}_{s_0}}f(\eta_1)\\
&=\frac{1-\overline{\zeta}_{s_0}\eta_1}{1-\overline{\alpha(\zeta_{s_0})}\alpha_{\lambda_1}}f(\eta_1)k_{\zeta_{s_0}}^{\alpha}(\eta_p)=ck_{\zeta_{s_0}}^{\alpha}(\eta_p)	 
\end{split}
\end{displaymath}
for all $1\leq p\leq m$. From this,
$$\langle f,v_{\eta_p}^{\alpha}\rangle=\langle ck_{\zeta_{s_0}}^{\alpha},v_{\eta_p}^{\alpha}\rangle$$
for all $1\leq p\leq m$ and $f$ is a scalar multiple of $k_{\zeta_{s_0}}^{\alpha}$.
Since
\begin{equation}\label{kerk}
{k}_{w}^{\alpha}=\overline{\alpha(w)}{w}\widetilde{k}_{w}^{\alpha}
\end{equation}
for all $w\in\partial\mathbb{D}$, $f$ is a scalar multiple of $\widetilde{k}_{\zeta_{s_0}}^{\alpha}$.

\vspace{0.1cm}
	
\textit{Case 2.} $w=\eta_{p_0}$ \textit{for some} $1\leq p_0\leq m$.
	
\vspace{0.1cm}

Note that if $g=k_w^{\beta}$ with $w\in\overline{\mathbb{D}}\setminus\{\zeta_1,\ldots,\zeta_n\}$, then
\begin{equation}\label{zef}
\begin{split}
	 a_{s,p}g(\zeta_s)-a_{1,p}g(\zeta_1)&=(1-\overline{\eta}_{p}\zeta_s)\frac{1-\overline{\beta(w)}\beta_{\lambda_2}}{1-\overline{w}\zeta_s}-(1-\overline{\eta}_{p}\zeta_1)\frac{1-\overline{\beta(w)}\beta_{\lambda_2}}{1-\overline{w}\zeta_1}\\
	&=\frac{(1-\overline{\beta(w)}\beta_{\lambda_2})(\overline{w}-\overline{\eta}_p)(\zeta_s-\zeta_1)}{(1-\overline{w}\zeta_s)(1-\overline{w}\zeta_1)}.
	\end{split}
\end{equation}
It follows that in this case
\begin{equation}\label{zz1}
a_{n,p_0}g(\zeta_n)-a_{1,p_0}g(\zeta_1)=0,
\end{equation}
and
\begin{equation}\label{zz2}
a_{n,p}g(\zeta_n)-a_{1,p}g(\zeta_1)\neq0
\end{equation}
for all $p\neq p_0$.

If $p_0=1$, then \eqref{gefen} (with $s=n$) and \eqref{zz1} imply that
$$[a_{n,p}g(\zeta_n)-a_{1,p}g(\zeta_1)]\overline{f(\eta_p)}=[a_{n,1}g(\zeta_n)-a_{1,1}g(\zeta_1)]\overline{f(\eta_1)}=0.$$
By \eqref{zz2}, $f(\eta_p)=0$ for all $p\neq p_0$. Clearly, $f$ is a scalar multiple of $k_{\eta_{p_0}}^{\alpha}$ and so a scalar multiple of $\widetilde{k}_{\eta_{p_0}}^{\alpha}$ by \eqref{kerk}.

If $p_0>1$, then \eqref{gefen} (with $p=p_0$ and $s=n$) and \eqref{zz1} give
$$0=[a_{n,p_0}g(\zeta_n)-a_{1,p_0}g(\zeta_1)]\overline{f(\eta_{p_0})}=[a_{n,1}g(\zeta_n)-a_{1,1}g(\zeta_1)]\overline{f(\eta_1)}.$$
By \eqref{zz2}, $f(\eta_1)=0$. From this and \eqref{gefen} again (with $s=n$),
$$[a_{n,p}g(\zeta_n)-a_{1,p}g(\zeta_1)]\overline{f(\eta_p)}=0$$
for all $p$ and therefore $f(\eta_p)=0$ for all $p\neq p_0$. As before, $f$ is a scalar multiple of $\widetilde{k}_{\eta_{p_0}}^{\alpha}$.

\vspace{0.1cm}
	
\textit{Case 3.} $w\notin\{\eta_1,\ldots,\eta_m\}\cup\{\zeta_1,\ldots,\zeta_n\}$.
	
\vspace{0.1cm}

In this case \eqref{gefen} (with $s=n$) gives
$$f(\eta_p)=\frac{\overline{a}_{n,1}\overline{g(\zeta_n)}-\overline{a}_{1,1}\overline{g(\zeta_1)}}{\overline{a}_{n,p}\overline{g(\zeta_n)}-\overline{a}_{1,p}\overline{g(\zeta_1)}}f(\eta_1)$$
for all $p>1$. By \eqref{zef},

\begin{displaymath}
\begin{split}
f(\eta_p)&=\frac{w-\eta_1}{w-\eta_p}f(\eta_1)=\\
&=\frac{\eta_1-w}{\alpha_{\lambda_1}-\alpha(w)}f(\eta_1)\widetilde{k}_{w}^{\alpha}(\eta_p)=c\widetilde{k}_{w}^{\alpha}(\eta_p).	
\end{split}
\end{displaymath}
Hence $f$ is a scalar multiple of $\widetilde{k}_{w}^{\alpha}$.

This completes the proof of the first implication (for $l=0$).

Now let $g=\widetilde{k}_{w}^{\beta}$. If $w\in\partial\mathbb{D}$, then $g$ is a scalar multiple of $k_{w}^{\beta}$ by \eqref{kerk} and $f$ is a scalar multiple of $\widetilde{k}_{w}^{\alpha}$ by the first part of the proof. Assume that $w\in\mathbb{D}$. Then
\begin{equation}\label{zef2}
	a_{s,p}g(\zeta_s)-a_{1,p}g(\zeta_1)=\frac{(\beta_{\lambda_2}-\beta(w))(1-\overline{\eta}_pw)(\zeta_1-\zeta_s)}{(\zeta_s-w)(\zeta_1-w)}.
\end{equation}
By \eqref{gefen} (with $s=n$),
\begin{displaymath}
\begin{split}
f(\eta_p)&=\frac{1-\overline{w}\eta_1}{1-\overline{w}\eta_p}f(\eta_1)\\
&=\frac{1-\overline{w}\eta_1}{1-\overline{\alpha(w)}\alpha_{\lambda_1}}f(\eta_1)k_{w}^{\alpha}(\eta_p)=ck_{w}^{\alpha}(\eta_p)	
\end{split}
\end{displaymath}
for all $1\leq p\leq m$, and $f$ is a scalar multiple of $k_{w}^{\alpha}$.

This completes the proof of the second implication (for $l=0$).

We now give the proof for $l>0$. In this case the matrix representation $(r_{s,p})$ of $g\otimes f$ must satisfy \eqref{c2} and \eqref{c3}. Instead of \eqref{gefen} we obtain
 \begin{equation}\label{gefen2}
a_{s,p}g(\zeta_s)\overline{f(\eta_p)}=a_{1,p}g(\zeta_1)\overline{f(\eta_p)}-a_{1,s}g(\zeta_1)\overline{f(\eta_s)},
\end{equation}
for all $p$, $s$ such that $1\leq p\leq m$, $1\leq s\leq l$, $s\neq p$, and
\begin{equation}\label{gefen3}
a_{s,p}g(\zeta_s)\overline{f(\eta_p)}=a_{s,1}g(\zeta_s)\overline{f(\eta_1)}+a_{1,p}g(\zeta_1)\overline{f(\eta_p)},
\end{equation}
for all $p$, $s$ such that $1\leq p\leq m$, $s>l$.

Let $g=k_w^{\beta}$ with $w\in\overline{\mathbb{D}}$. As before, we have three cases to consider.

\vspace{0.1cm}
	
\textit{Case 1.} $w=\zeta_{s_0}$ \textit{for some} $1\leq s_0\leq n$.
	
\vspace{0.1cm}

In this case $g(\zeta_{s_0})\neq0$ and $g(\zeta_s)=0$ for all $s\neq s_0$.

If $s_0=1=l$, then \eqref{gefen3} (with $s=n>1$) gives
$$0=a_{n,p}g(\zeta_{n})\overline{f(\eta_p)}=a_{n,1}g(\zeta_{n})\overline{f(\eta_1)}+a_{1,p}g(\zeta_{1})\overline{f(\eta_p)}=a_{1,p}g(\zeta_{1})\overline{f(\eta_p)}$$
for all $1\leq p\leq m$ and so $f(\eta_p)=0$ for all $p>1$. Hence $f$ is a scalar multiple of ${k}_{\eta_{s_0}}^{\alpha}={k}_{\zeta_{s_0}}^{\alpha}$ (since here $\eta_{s_0}=\zeta_{s_0}$) and a scalar multiple of $\widetilde{k}_{\zeta_{s_0}}^{\alpha}$ by \eqref{kerk}.

If $s_0=1<l$, then \eqref{gefen2} (with $p=1$) implies that
$$0=a_{s,1}g(\zeta_{s})\overline{f(\eta_1)}=-a_{1,s}g(\zeta_{1})\overline{f(\eta_s)}$$
for every $s\neq s_0=1$, $s\leq l$. Form this, $f(\eta_s)=0$ for all $1<s\leq l$. If $m=l$, then this implies that $f$ is a scalar multiple of ${k}_{\zeta_{s_0}}^{\alpha}$ and as above, a scalar multiple of $\widetilde{k}_{\zeta_{s_0}}^{\alpha}$. If $m>l$, then by \eqref{gefen2} (with $s=l$, $p>l$),
$$0=a_{l,p}g(\zeta_{l})\overline{f(\eta_p)}=a_{1,p}g(\zeta_{1})\overline{f(\eta_p)}$$
and $f(\eta_p)=0$ for all $p>l$, which also implies that $f$ is a scalar multiple of $\widetilde{k}_{\zeta_{s_0}}^{\alpha}$.

If $1<s_0\leq l$, then \eqref{gefen2} (with $s=s_0$) implies that
$$a_{s_0,p}g(\zeta_{s_0})\overline{f(\eta_p)}=0$$
and $f(\eta_p)=0$ for all $p\neq s_0$. Hence $f$ is a scalar multiple of $\widetilde{k}_{\zeta_{s_0}}^{\alpha}$.

If $1\leq l<s_0$, then \eqref{gefen3} (with $s=s_0$) gives
$$a_{s_0,p}g(\zeta_{s_0})\overline{f(\eta_p)}=a_{s_0,1}g(\zeta_{s_0})\overline{f(\eta_1)}$$
and
\begin{displaymath}
\begin{split}
f(\eta_p)&=\frac{\overline{a}_{s_0,1}}{\overline{a}_{s_0,p}}f(\eta_1)=\frac{1-\overline{\zeta_{s_0}}\eta_1}{1-\overline{\zeta_{s_0}}\eta_p}f(\eta_1)\\
&=\frac{1-\overline{\zeta_{s_0}}\eta_1}{1-\overline{\alpha(\zeta_{s_0})}\alpha_{\lambda_1}}f(\eta_1)k_{\zeta_{s_0}}^{\alpha}(\eta_p)=ck_{\zeta_{s_0}}^{\alpha}(\eta_p)
\end{split}
\end{displaymath}
for all $1\leq p\leq m$, which again implies that $f$ is a scalar multiple of $\widetilde{k}_{\zeta_{s_0}}^{\alpha}$.

\vspace{0.1cm}
	
\textit{Case 2.} $w=\eta_{p_0}$ \textit{for some} $l<p_0\leq m$.
	
\vspace{0.1cm}

In this case \eqref{zef} implies that
$$a_{n,p_0}g(\zeta_n)-a_{1,p_0}g(\zeta_1)=0,$$
and
$$a_{n,p}g(\zeta_n)-a_{1,p}g(\zeta_1)\neq0$$
for all $p\neq p_0$.

If $l=n$, then \eqref{gefen2} (with $p=p_0$, $s=n$) gives
$$0=[a_{n,p_0}g(\zeta_n)-a_{1,p_0}g(\zeta_1)]\overline{f(\eta_{p_0})}=-a_{1,n}g(\zeta_1)\overline{f(\eta_{n})}.$$
Since $g(\zeta_1)\neq 0$, we have $f(\eta_{n})=0$. By \eqref{gefen2} again (with $s=n$),
\begin{equation}\label{zef3}
[a_{n,p}g(\zeta_n)-a_{1,p}g(\zeta_1)]\overline{f(\eta_{p})}=-a_{1,n}g(\zeta_1)\overline{f(\eta_{n})}=0
\end{equation}
for $p\neq n$ and so $f(\eta_{p})=0$ for all $p\neq p_0$. Hence $f$ is a scalar multiple of $\widetilde{k}_{\eta_{p_0}}^{\alpha}$.

If $l<n$, then \eqref{gefen3} (with $p=p_0$, $s=n$) implies that $f(\eta_{1})=0$ and that \eqref{zef3} holds for all $1\leq p\leq m$. Hence $f(\eta_{p})=0$ for all $p\neq p_0$ and $f$ is a scalar multiple of $\widetilde{k}_{\eta_{p_0}}^{\alpha}$.

\vspace{0.1cm}
	
\textit{Case 3.} $w\notin\{\eta_1,\ldots,\eta_m\}\cup\{\zeta_1,\ldots,\zeta_n\}$.

\vspace{0.1cm}

If $l=n$, then \eqref{gefen2} (with $p=1$, $s=n$) implies that

\begin{equation}\label{fefen}
a_{n,1}g(\zeta_n)\overline{f(\eta_{1})}=-a_{1,n}g(\zeta_1)\overline{f(\eta_{n})}.
\end{equation}
From this
\begin{displaymath}
\begin{split}
f(\eta_n)&=-\frac{(1-\eta_1\overline{\zeta}_{n})(1-w\overline{\zeta}_1)}{(1-\eta_n\overline{\zeta}_{1})(1-w\overline{\zeta}_n)}f(\eta_1)\\
&=-\frac{(\zeta_n-\eta_{1})(\zeta_1-w)}{(\zeta_1-\eta_{n})(\zeta_n-w)}f(\eta_1)=\frac{\eta_1-w}{\eta_n-w}f(\eta_1).
\end{split}
\end{displaymath}

Moreover, \eqref{gefen2} (with $s=n$) together with \eqref{fefen} gives
\begin{equation}\label{zz3}
[a_{n,p}g(\zeta_n)-a_{1,p}g(\zeta_1)]\overline{f(\eta_{p})}=a_{n,1}g(\zeta_n)\overline{f(\eta_{1})}
\end{equation}
and, by \eqref{zef},
\begin{displaymath}
\begin{split}
f(\eta_{p})&=\frac{\overline{a}_{n,1}\overline{g(\zeta_n)}}{\overline{a}_{n,p}\overline{g(\zeta_n)}-\overline{a}_{1,p}\overline{g(\zeta_1)}}f(\eta_{1})\\
&=\frac{(1-\eta_1\overline{\zeta}_{n})(1-w\overline{\zeta}_1)}{(\bar{\zeta}_1-\bar{\zeta}_n)(\eta_{p}-w)}f(\eta_1)=\frac{\eta_1-w}{\eta_p-w}f(\eta_1)
\end{split}
\end{displaymath}
for all $p\neq n$. Hence
\begin{displaymath}
f(\eta_{p})=\frac{\eta_1-w}{\eta_p-w}f(\eta_1)=\frac{\eta_1-w}{\alpha_{\lambda_1}-\alpha(w)}f(\eta_1)\widetilde{k}_w^{\alpha}(\eta_p)=c\widetilde{k}_w^{\alpha}(\eta_p)
\end{displaymath}
for all $p$, and $f$ is a scalar multiple of $\widetilde{k}_{w}^{\alpha}$.

If $l<n$, then it follows form \eqref{gefen3} (with $s=n$) that \eqref{zz3} holds for all $p$. As above
\begin{displaymath}
f(\eta_{p})=\frac{\eta_1-w}{\eta_p-w}f(\eta_1)=c\widetilde{k}_w^{\alpha}(\eta_p)
\end{displaymath}
 and $f$ is a scalar multiple of $\widetilde{k}_{w}^{\alpha}$.

 This completes the proof of the first implication (for $l>0$).

Now let $g=\widetilde{k}_{w}^{\beta}$. As in the proof for $l=0$, we can assume that $w\in\mathbb{D}$. Repeating the argument form Case 3 above one can show that
\begin{displaymath}
\begin{split}
f(\eta_p)&=\frac{1-\overline{w}\eta_1}{1-\overline{w}\eta_p}f(\eta_1)\\
&=\frac{1-\overline{w}\eta_1}{1-\overline{\alpha(w)}\alpha_{\lambda_1}}f(\eta_1)k_w^{\alpha}(\eta_p)=ck_w^{\alpha}(\eta_p)
\end{split}
\end{displaymath}
for all $p$. Hence $f$ is a scalar multiple of $k_{w}^{\alpha}$.

This completes the proof of the second implication (for $l>0$) and the proof of the theorem.
\end{proof}

\begin{thm}
Let $\alpha$ and $\beta$ be two finite Blaschke products of degree $m>0$ and $n>0$, respectively. The only rank-one operators in $\mathscr{T}(\alpha,\beta)$ are the nonzero scalar multiples of the operators $\widetilde{k}_w^{\beta}\otimes k_w^{\alpha}$ and $k_w^{\beta}\otimes \widetilde{k}_w^{\alpha}$, $w\in\overline{\mathbb{D}}$, if and only if either $mn\leq2$, or $m>1$ and $n>1$.
\end{thm}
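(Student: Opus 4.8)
The plan is to prove the two implications of the equivalence separately, using Corollary~\ref{mnkor} for one of them and Lemma~\ref{leon}, together with a direct analysis of the matrix identities of Theorem~\ref{macierz3}, for the other. For the ``only if'' direction I would argue contrapositively: the negation of ``$mn\leq2$ or ($m>1$ and $n>1$)'' is precisely ``$m=1$ and $n>2$, or $m>2$ and $n=1$'', and in either case Corollary~\ref{mnkor}(b) produces a rank-one operator in $\mathscr{T}(\alpha,\beta)$ that is neither a scalar multiple of $\widetilde{k}_w^{\beta}\otimes k_w^{\alpha}$ nor of $k_w^{\beta}\otimes\widetilde{k}_w^{\alpha}$; hence if the rank-one operators are exactly these, the stated condition must hold.

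For the ``if'' direction the case $mn\leq2$ is immediate from Corollary~\ref{mnkor}(a), which gives that every operator of $\mathscr{T}(\alpha,\beta)$, in particular every rank-one one, is a scalar multiple of one of the two families, while conversely each $\widetilde{k}_w^{\beta}\otimes k_w^{\alpha}$ and $k_w^{\beta}\otimes\widetilde{k}_w^{\alpha}$ is a nonzero rank-one element of $\mathscr{T}(\alpha,\beta)$ for $w\in\overline{\mathbb{D}}$. The substantive case is $m>1$ and $n>1$. There I fix a nonzero rank-one operator $g\otimes f\in\mathscr{T}(\alpha,\beta)$ and reduce everything to showing that $f$ is a scalar multiple of a reproducing kernel $k_w^{\alpha}$ or a conjugate kernel $\widetilde{k}_w^{\alpha}$ for some $w\in\overline{\mathbb{D}}$: once this is known, Lemma~\ref{leon} forces $g$ to be a scalar multiple of $\widetilde{k}_w^{\beta}$ or $k_w^{\beta}$ respectively, so that $g\otimes f$ is a scalar multiple of $\widetilde{k}_w^{\beta}\otimes k_w^{\alpha}$ or $k_w^{\beta}\otimes\widetilde{k}_w^{\alpha}$.

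To carry out this reduction when $l=0$, I would start from \eqref{gefen} and rewrite it, using $a_{s,p}=1-\overline{\eta}_p\zeta_s$, as
\begin{equation*}
\overline{f(\eta_p)}\,(A_s-\overline{\eta}_pB_s)=\overline{f(\eta_1)}\,(A_s-\overline{\eta}_1B_s),\qquad A_s=g(\zeta_s)-g(\zeta_1),\quad B_s=\zeta_sg(\zeta_s)-\zeta_1g(\zeta_1),
\end{equation*}
valid for all $1\leq p\leq m$ and $1\leq s\leq n$. Because $n>1$ and $g\neq0$, at least one pair $(A_s,B_s)$ is nonzero (otherwise $g$ would vanish at every $\zeta_s$ and hence vanish identically). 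Fixing such an $s$: if $B_s\neq0$, then, putting $\mu=A_s/B_s$, the right-hand side is independent of $p$, so $\overline{f(\eta_p)}\,(\mu-\overline{\eta}_p)$ is constant in $p$ and therefore $f(\eta_p)$ is proportional to $1/(\overline{\mu}-\eta_p)$; exactly as in Case~3 of the proof of Proposition~\ref{pepe}, this is the node-value pattern of $\widetilde{k}_{\overline{\mu}}^{\alpha}$ when $\overline{\mu}\in\overline{\mathbb{D}}$ and of $k_{1/\mu}^{\alpha}$ otherwise. If instead $B_s=0$ (so $A_s\neq0$), the identity forces $f$ to be constant on the nodes, that is, a scalar multiple of $k_0^{\alpha}$. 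In every case $f$ is a scalar multiple of a reproducing kernel or a conjugate kernel, kernels at boundary nodes being both by \eqref{kerk}.

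The remaining and most laborious part is $l>0$, where \eqref{gefen} is replaced by \eqref{gefen2} and \eqref{gefen3}; the common nodes $\eta_j=\zeta_j$, $j\leq l$, introduce extra relations and call for the same division into subcases ($w$ a common node, $w$ a node of one basis only, or $w$ off both node sets) used in the proof of Lemma~\ref{leon}. I expect this to be the main obstacle, being the least uniform part of the argument; but in each subcase the values $f(\eta_p)$ again follow a pattern $1/(\eta_p-w)$ or $1/(1-\overline{w}\eta_p)$, so that $f$ is a scalar multiple of $\widetilde{k}_w^{\alpha}$ or $k_w^{\alpha}$ with $w\in\overline{\mathbb{D}}$, and Lemma~\ref{leon} then supplies the corresponding form of $g$, completing the proof.
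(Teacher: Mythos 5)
Your skeleton coincides with the paper's: Corollary~\ref{mnkor}(b), used contrapositively, gives the ``only if'' direction; Corollary~\ref{mnkor}(a) settles $mn\leq2$; and the substantive case $m>1$, $n>1$ is reduced via Lemma~\ref{leon} to showing that the $K_{\alpha}$-leg $f$ of a nonzero rank-one $g\otimes f\in\mathscr{T}(\alpha,\beta)$ is a scalar multiple of some $k_w^{\alpha}$ or $\widetilde{k}_w^{\alpha}$ with $w\in\overline{\mathbb{D}}$. Note, however, that the paper never carries out this last step: it only says to imitate Sarason's Theorem 5.1(c) with Lemma~\ref{leon} in place of complex symmetry, and leaves the details to the reader. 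So your Clark-basis computation is a concrete filling-in rather than a repetition, and for $l=0$ it is correct: the rewriting of \eqref{gefen} via $A_s=g(\zeta_s)-g(\zeta_1)$, $B_s=\zeta_s g(\zeta_s)-\zeta_1 g(\zeta_1)$ is exact; some $(A_s,B_s)\neq(0,0)$ because $n>1$ and $g\neq0$; and the dichotomy $B_s\neq0$ versus $B_s=0$ produces the node patterns of $\widetilde{k}_{\overline{\mu}}^{\alpha}$ (or $k_{1/\mu}^{\alpha}$) and of $k_0^{\alpha}$. The one slip is the degenerate subcase $\mu=\overline{\eta}_{p_0}$: there the constant equals $0$, the formula $f(\eta_p)\sim 1/(\overline{\mu}-\eta_p)$ breaks down, and one concludes instead that $f$ vanishes at every node except $\eta_{p_0}$, hence is a multiple of $k_{\eta_{p_0}}^{\alpha}$; this is easily patched.

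The genuine gap is $l>0$, which you defer with ``I expect''. It is not a routine repetition of the $l=0$ computation: for $s\leq l$ the identity \eqref{gefen2} constrains $f(\eta_p)$ only for $p\neq s$, so a usable index $s_0\leq l$ gives the kernel pattern on just $m-1$ nodes, and recovering the missing value $f(\eta_{s_0})$, together with the consistency of the parameters $\mu_s$ coming from different indices $s$ (a M\"obius-rigidity argument needing at least three nodes, hence failing for small $m$), requires extra case analysis; moreover, when $l=n$ (in particular $l=m=n$) no equation of type \eqref{gefen3} exists at all. This is precisely why the paper's proof of Lemma~\ref{leon} for $l>0$ splinters into so many cases. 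Fortunately the whole issue can be avoided: Theorem~\ref{macierz3} is valid for every choice of $\lambda_1,\lambda_2\in\partial\mathbb{D}$, and for a fixed $\lambda_1$ at most $m$ values of $\lambda_2$ produce a common node, since a common node $\eta_j$ forces $\beta_{\lambda_2}=\beta(\eta_j)$ and $\lambda_2\mapsto\beta_{\lambda_2}$ is a bijection of $\partial\mathbb{D}$. Choosing $\lambda_2$ outside this finite set gives $l=0$, and your completed $l=0$ argument plus Lemma~\ref{leon} then proves the theorem in full. With this observation (and the degenerate-$\mu$ patch) your proposal becomes a complete, self-contained proof, in fact more detailed than the paper's own.
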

\begin{proof}
By Corollary \ref{mnkor}, if the only rank-one operators in $\mathscr{T}(\alpha,\beta)$ are the nonzero scalar multiples of $\widetilde{k}_w^{\beta}\otimes k_w^{\alpha}$ and $k_w^{\beta}\otimes \widetilde{k}_w^{\alpha}$, $w\in\overline{\mathbb{D}}$, then either $mn\leq2$, or $m>1$ and $n>1$.

To complete the proof we only need to show that if $m>1$ and $n>1$, then every rank-one operator from $\mathscr{T}(\alpha,\beta)$ is a scalar multiple of $\widetilde{k}_w^{\beta}\otimes k_w^{\alpha}$ or $k_w^{\beta}\otimes \widetilde{k}_w^{\alpha}$ for some $w\in\overline{\mathbb{D}}$. The proof is similar to that of \cite[Thm. 5.1(c)]{s}. Sarason's proof is based on the fact that every truncated Topelitz operator is complex symmetric. Here one must use Lemma \ref{leon} instead. The details are left to the reader.
\end{proof}

\end{document}